\newtheorem{cro}{Corollary}[section]
\newtheorem{defn}{Definition}[section]
\newtheorem{thm}{Theorem}[section]
\newtheorem{lem}{Lemma}[section]
\numberwithin{equation}{section}
\begin{document}

\title{Generic Points in Some Nonuniformly Hyperbolic Systems Via Pesin Theory
 \footnotetext {* Corresponding author}
  \footnotetext {2010 Mathematics Subject Classification: 37D25, 37D35, 37C40}}
\author{Zheng Yin$^{1}$, Ercai Chen$^{*1,2}$, Xiaoyao Zhou$^{1}$ \\
  \small   1 School of Mathematical Sciences and Institute of Mathematics, Nanjing Normal University,\\
   \small   Nanjing 210023, Jiangsu, P.R.China\\
    \small 2 Center of Nonlinear Science, Nanjing University,\\
     \small   Nanjing 210093, Jiangsu, P.R.China.\\
      \small    e-mail: zhengyinmail@126.com ecchen@njnu.edu.cn zhouxiaoyaodeyouxian@126.com
}
\date{}
\maketitle

\begin{center}
 \begin{minipage}{120mm}
{\small {\bf Abstract.} This article is devoted to the investigation
of the topological pressure of generic points for nonuniformly
hyperbolic systems via Pesin theory. In particular, our result can
be applied to the nonuniformly hyperbolic diffeomorphisms described by Katok
and several other classes of diffeomorphisms derived from Anosov systems. }
\end{minipage}
 \end{center}

\vskip0.5cm {\small{\bf Keywords and phrases:} Topological pressure,
Pesin set, generic points.}\vskip0.5cm
%%%%%%%%%%%%%%%%%%%%%%%%%%%%%%%%%%%%%%%%%%%%%%%%%%%%%%%%%%%%%%%%%%%%%%%%%%%%%%%%
%%%%%%%%%%%%%%%%%%%%%%%%%%%%%%%%%%%%%%%%%%%%%%%%%%%%%%%%%%%%%%%%%%%%%%%%%%%%%%%%
%%%%%%%%%%%%%%%%%%%%%%%%%%%%%%%%%%%%%%%%%%%%%%%%%%%%%%%%%%%%%%%%%%%%%%%%%%%%%%%%
%%%%%%%%%%%%%%%%%%%%%%%%%%%%%%%%%%%%%%%%%%%%%%%%%%%%%%%%%%%%%%%%%%%%%%%%%%%%%%%%
\section{Introduction}
We say $(M,d,f)$ is a topological dynamical system means that
$(M,d)$ is a compact metric space and $f:M\to M$ is a continuous
map. Let $\mathscr{M}(M)$, $\mathscr M_{\rm inv}(M,f)$ and $\mathscr M_{\rm erg}(M,f)$ be
the set of all Borel probability measures, $f$-invariant probability measures and ergodic measures
respectively. For an $f$-invariant subset $Z\subset X,$ let
$\mathscr M_{\rm inv}(Z,f)$ denote the subset of $\mathscr M_{\rm inv}(M,f)$
for which the measures $\mu$ satisfy $\mu(Z)=1$ and $\mathscr M_{\rm erg}(Z,f)$
denote those which are ergodic. Denote by $C^0(M)$ the space of continuous
functions from $M$ to $\mathbb{R}$ with the sup norm. For $\varphi\in C^0(M)$
and $n\geq1$ we denote $\sum_{i=0}^{n-1}\varphi(f^ix)$ by $S_n\varphi(x)$.
For every $\epsilon>0$, $n\in \mathbb{N}$ and a point $x\in M$, define
$B_n(x,\epsilon)=\{y\in M:d(f^ix,f^iy)<\epsilon,\forall 0\leq i\leq n-1\}$.
Given $x\in X,$ the $n$-ordered empirical measure of $x$ is
given by
\begin{align*}
\mathscr{E}_n(x)=\frac{1}{n}\sum\limits_{i=0}^{n-1}\delta_{f^ix},
\end{align*}
where $\delta_y$ is the Dirac mass at $y$. Denote by $V(x)$ the set
of limit measures of the sequence of measures $\mathscr{E}_n(x).$
Then $V(x)$ is always a compact connected subset of $\mathscr M_{\rm
inv}(Z,f).$ Let $G_\mu:=\left\{x\in X: V(x)=\{\mu\}\right\}$ be the
set of  generic points of $\mu.$ By Birkhoff ergodic theorem and
ergodic decomposition theorem, we have
\begin{itemize}
  \item $\mu(G_\mu)=1,$ if $\mu$ is ergodic,
  \item $\mu(G_\mu)=0,$ if $\mu$ is non-ergodic.
\end{itemize}
In 1973, Bowen \cite{Bow} proved that the metric entropy of $\mu$ is
equal to the topological entropy of $G_\mu$ i.e., $h_{\rm top}
(G_\mu,f)=h_\mu(f)$ when $\mu$ is ergodic.  In 2007, Pfister \&
Sullivan \cite{PfiSul} showed that for any $f$-invariant measure
$\mu,$
\begin{align} \label{f1.1}
 h_{\rm top} (G_\mu,f)=h_\mu(f),
\end{align}
when the topological dynamical system $(X,d,f)$ is endowed with
$g$-almost product property (a weaker form of specification). This
implies the conditional principle of Takens and Verbitskiy
\cite{TakVer}
\begin{equation}\begin{split}\label{f1.2}
 & h_{\rm top} \left(\left\{x\in X:\lim\limits_{n\to\infty}\frac{1}{n}\sum\limits_{i=0}^{n-1}\varphi
 (f^ix) = \alpha\right\},f\right)  \\= & \sup\left\{h_\mu(f):\mu\in \mathscr M_{\rm inv}(X,f),\int \varphi d\mu=\alpha\right\},
\end{split}\end{equation} where $\varphi$ is a continuous function and
$\alpha\in\mathbb{R}.$ In the view of large deviations theory,
(\ref{f1.1}) can be seen as level-2 and (\ref{f1.2}) can be seen as
level-1. Recently, (\ref{f1.1}) and (\ref{f1.2}) are extended to
topological pressure by Pei \& Chen \cite{PeiChe} and Thompson
\cite{Tho1}, respectively. See Yamamoto \cite{Yam1} for higher
version and  Feng \& Huang \cite{FenHua} for
sub-additive case.

The investigation of multifractal analysis for nonuniformly hyperbolic systems
has attracted more and more attentions. See
Bomfim \& Varandas \cite{BomVar},  Chung \& Takahasi \cite{ChuTak}
and  Thompson \cite{Tho}. Very recently, Liang et al.
\cite{LiaLiaSUnTia} showed that for an ergodic hyperbolic measure
$\omega$ of a $C^{1+\alpha}$ diffeomorphism $f$ on a Riemannian
manifold $X,$ there is an $\omega$-full measured set
$\widetilde{\Lambda}$ such that for every invariant probability
measure $\mu\in \mathscr M_{\rm inv}(\widetilde{\Lambda},f),$
\begin{align*}
 h_{\rm top} (G_\mu,f)=h_\mu(f).
\end{align*}
The purpose of this article is extending the above result to
topological pressure. We use the Generalised Pressure Distribution Principle
(see lemma \ref{lem3.5}) to estimate the lower bound of the
topological of $G_\mu$, which is different from \cite{LiaLiaSUnTia}.

This article is organized as follows. In section 2, we provide some notions
and results of Pesin theory and state the main results. Section 3 is devoted
to the proof of the main result. Examples and applications are given in section 4.

%%%%%%%%%%%%%%%%%%%%%%%%%%%%%%%%%%%%%%%%%%%%%%%%%%%%%%%%%%%%%%%%%%%%%%%%%
%%%%%%%%%%%%%%%%%%%%%%%%%%%%%%%%%%%%%%%%%%%%%%%%%%%%%%%%%%%%%%%%%%%%%%%%%
%%%%%%%%%%%%%%%%%%%%%%%%%%%%%%%%%%%%%%%%%%%%%%%%%%%%%%%%%%%%%%%%%%%%%%%%%
%%%%%%%%%%%%%%%%%%%%%%%%%%%%%%%%%%%%%%%%%%%%%%%%%%%%%%%%%%%%%%%%%%%%%%%%%
%%%%%%%%%%%%%%%%%%%%%%%%%%%%%%%%%%%%%%%%%%%%%%%%%%%%%%%%%%%%%%%%%%%%%%%%%
%%%%%%%%%%%%%%%%%%%%%%%%%%%%%%%%%%%%%%%%%%%%%%%%%%%%%%%%%%%%%%%%%%%%%%%%%
%%%%%%%%%%%%%%%%%%%%%%%%%%%%%%%%%%%%%%%%%%%%%%%%%%%%%%%%%%%%%%%%%%%%%%%%%
%%%%%%%%%%%%%%%%%%%%%%%%%%%%%%%%%%%%%%%%%%%%%%%%%%%%%%%%%%%%%%%%%%%%%%%%%

\section{ Preliminaries}
In this section, we first present some some notions and results of
Pesin theory \cite{BarPes2,KatHas,Pol}. Then we introduce the definition
of topological pressure and state the main results.

Suppose $M$ is a compact connected boundary-less Riemannian
$n$-dimension manifold and $f:X\to X$ is a $C^{1+\alpha}$
diffeomorphism. Let $\mu\in\mathscr M_{\rm erg}(Z,f)$ and $Df_x$ denote
the tangent map of $f$ at $x\in M.$ We say that
$x\in X$ is a regular point of $f$ if there exist
$\lambda_1(\mu)>\lambda_2(\mu)>\cdots>\lambda_{\phi(\mu)}(\mu)$ and a
decomposition on the tangent space $T_x
M=E_1(x)\oplus\cdots\oplus E_{\phi(\mu)}(x)$ such that
\begin{align*}
\lim\limits_{n\to\infty}\frac{1}{n}\log\|(Df^n_x)u\|=\lambda_j(x),
\end{align*}
where $0\neq u\in E_j(x), 1\leq j\leq \phi(\mu).$ The number $\lambda_j(x)$ and
the space $E_j(x)$ are called the Lyapunov exponents and the eigenspaces of
$f$ at the regular point $x,$ respectively. Oseledets theorem \cite{Ose} say that all
regular points forms a Borel set with total measure. For a regular point
$x\in M$, we define
\begin{align*}
\lambda^+(\mu)=\min\{\lambda_i(\mu)|\lambda_i(\mu)\geq0,1\leq i\leq \phi(\mu)\}
\end{align*}
and
\begin{align*}
\lambda^-(\mu)=\min\{-\lambda_i(\mu)|\lambda_i(\mu)\leq0,1\leq i\leq \phi(\mu)\}.
\end{align*}
We appoint $\min\emptyset=0$. An ergodic measure $\mu$ is hyperbolic if $\lambda^+(\mu)$
and $\lambda^-(\mu)$ are both non-zero.

\begin{defn}
Given $\beta_1,\beta_2\gg\epsilon>0$ and for all $k\in\mathbb{Z}^+,$
the hyperbolic block $\Lambda_k=\Lambda_k(\beta_1,\beta_2,\epsilon)$
consists of all points $x\in M$  such that there exists a
decomposition $T_xM=E_x^s\oplus E_x^u$ with invariance property
$Df^t(E_x^s)=E^s_{f^tx}$ and $Df^t(E_x^u)=E^u_{f^tx}$, and satisfying:
\begin{itemize}
  \item $\|Df^n|E^s_{f^tx}\|\leq e^{\epsilon k}e^{-(\beta_1-\epsilon)n}e^{\epsilon|t|},\forall t\in\mathbb{Z},n\geq1;$
  \item $\|Df^{-n}|E^u_{f^tx}\|\leq e^{\epsilon k}e^{-(\beta_2-\epsilon)n}e^{\epsilon|t|},\forall t\in\mathbb{Z},n\geq1;$
  \item $\tan (\angle(E^s_{f^tx},E^u_{f^tx}))\geq e^{-\epsilon k}e^{-\epsilon|t|},\forall t\in\mathbb{Z}.$
\end{itemize}
\end{defn}

\begin{defn}
$
\Lambda(\beta_1,\beta_2,\epsilon)=\bigcup\limits_{k=1}^\infty\Lambda_k(\beta_1,\beta_2,\epsilon)
$ is a Pesin set.
\end{defn}
The following statements are elementary properties of Pesin blocks (see \cite{Pol}):
\begin{itemize}
  \item[(1)] $\Lambda_1\subseteq\Lambda_2\subseteq\cdots;$
  \item[(2)] $f(\Lambda_k)\subseteq\Lambda_{k+1},f^{-1}(\Lambda_k)\subseteq\Lambda_{k+1};$
  \item[(3)] $\Lambda_k$ is compact for each $k\geq1$;
  \item[(4)] For each $k\geq1$, the splitting $\Lambda_k\ni x\mapsto E_x^s\oplus E_x^u$ is continuous.
\end{itemize}
The Pesin set $\Lambda(\beta_1,\beta_2,\epsilon)$ is an
$f$-invariant set but usually not compact. Given an ergodic measure
$\mu\in\mathscr M_{\rm erg}(M,f)$, denote by $\mu|\Lambda_l$ the
conditional measure of $\mu$ on $\Lambda_l.$ Let
$\widetilde{\Lambda}_l=$ supp$(\mu|\Lambda_l)$ and
$\widetilde{\Lambda}_\mu=\bigcup_{l\geq1}\widetilde{\Lambda}_l.$
If $\omega$ is an ergodic hyperbolic measure for $f$ and $\beta_1\leq\lambda^-(\omega)$ and
$\beta_2\leq\lambda^+(\omega)$, then $\omega\in \mathscr M_{\rm inv}(\widetilde{\Lambda}_\omega,f)$.

\textbf{Lyapunov metric.}
Suppose $\Lambda(\beta_1,\beta_2,\epsilon)=\bigcup_{l\geq1}\Lambda_k(\beta_1,\beta_2,\epsilon)$
is a nonempty Pesin set. Let $\beta'_1=\beta_1-2\epsilon,
\beta'_2=\beta_2-2\epsilon.$ It follows from $\epsilon\ll
\beta_1,\beta_2$ that $\beta'_1>0,\beta'_2>0.$ Given $x\in
\Lambda(\beta_1,\beta_2,\epsilon),$ we define
\begin{align*}
\|v_s\|_s&=\sum\limits_{n=1}^\infty e^{\beta'_1n}\|Df^n_x(v_s)\|,
\forall v_s\in E_x^s,\\
\|v_u\|_u&=\sum\limits_{n=1}^\infty e^{\beta'_2n}\|Df^{-n}_x(v_u)\|,
\forall v_u\in E_x^u,\\
\|v\|'&=\max(\|v_s\|_s,\|v_u\|_u), {\rm where~} v=v_s+v_u.
\end{align*}
The norm $\|\cdot\|'$ is called Lyapunov metric, which is not
equivalent to the Riemannian metric generally.
With the Lyapunov metric $f:\Lambda\to\Lambda$
is uniformly hyperbolic. The following estimates are known:
\begin{enumerate}
\item[(i)] $\|Df|E_x^s\|'\leq e^{-\beta_1'}$, $\|Df^{-1}|E_x^u\|'\leq e^{-\beta_2'}$;
\item[(ii)] $\frac{1}{\sqrt n}\|v\|_x\leq \|v\|_x^\prime
\leq\frac{2}{1-e^{-\epsilon}} e^{\epsilon k}\|v\|_x$, $\forall v\in T_xM,x\in \Lambda_k$.
\end{enumerate}

\textbf{Lyapunov neighborhood.}
Fix a point $x\in \Lambda(\beta_1,\beta_2;\epsilon)$
By taking charts about $x$ and $f(x),$  we can assume
without loss of generality that $x\in\mathbb{R}^d,
f(x)\in\mathbb{R}^d.$ For a sufficiently small neighborhood $U$ of
$x,$ we can trivialize tangent bundle over $U$ by identifying $T_U M\equiv
U\times\mathbb{R}^d.$ For any point $y\in U$ and tangent vector
$v\in T_yM,$ we can then use the identification
$T_U M\equiv U\times\mathbb{R}^d$ to
translate the vector $v$ to a corresponding
vector $\overline{v}\in T_xM.$ We then define
$\|v\|''_y=\|\overline{v}\|'_x$, where $\|\cdot\|''$ indicates
the Lyapunov metric. This define a new norm $\|\cdot\|''$(which agrees
with $\|\cdot\|'$ on the fiber $T_xM$). Similarly, we can define
$\|\cdot\|''_z$ on $T_zM$ (for any $z$ in a sufficiently small
neighborhood of $fx$ or $f^{-1}x$). We write $\overline{v}$ as $v$
whenever there is no confusion. We can define a new splitting
$T_yM=E_y^{s'}\oplus E_y^{u'},y\in U$ by translating the splitting
$T_xM=E_x^s\oplus E_x^u$ (and similarly for $T_zM=E_z^{s'}\oplus E_z^{u'}$).

There exist
$\beta''_1=\beta_1-3\epsilon>0,\beta''_2=\beta_2-3\epsilon>0$ and
$\epsilon_0>0$ such that if we set
$\epsilon_k=\epsilon_0\exp(-\epsilon k)$ then for any $y\in
B(x,\epsilon_k)$ in an $\epsilon_k$ neighborhood of $x\in\Lambda_k,$
we have a splitting $T_y M=E_y^{s'}\oplus E_y^{u'}$ with hyperbolic behaviour:
\begin{enumerate}
\item[(i)] $\|Df_y(v)\|''_{fy}\leq e^{-\beta''_1}\|v\|''$ for every $v\in
E_y^{s'};$
\item[(ii)] $\|Df^{-1}_y(w)\|''_{f^{-1}y}\leq e^{-\beta''_2}\|w\|''$ for
every $w\in E_y^{u'}.$
\end{enumerate}

\begin{defn}
We define the Lyapunov neighborhood $\prod=\prod(x,a\epsilon_k)$ of
$x\in\Lambda_k$(with size $a\epsilon_k,0<a<1$)
to be the neighborhood of $x$ in $X$
which is the exponential projection onto $M$ of the tangent
rectangle $(-a\epsilon_k,a\epsilon_k)E_x^s\oplus
(-a\epsilon_k,a\epsilon_k)E_x^u.$
\end{defn}

Let $\{\delta_k\}_{k=1}^\infty $ be a sequence of positive real
numbers. Let $\{x_n\}_{n=-\infty}^\infty$ be a sequence of points in
$\Lambda=\Lambda(\beta_1,\beta_2,\epsilon)$ for which there exists a
sequence $\{s_n\}_{n=-\infty}^{\infty}$ of positive integers satisfying:
\begin{equation*}\begin{split}
&\text{(a) } x_n\in\Lambda_{s_n},\forall n\in\mathbb{Z};\\
&\text{(b) } |s_n-s_{n-1}|\leq 1, \forall n\in\mathbb{Z};\\
&\text{(c) } d(f(x_n),x_{n+1})\leq\delta_{s_n},  \forall n\in\mathbb{Z},
\end{split}\end{equation*}
then we call $\{x_n\}_{n=-\infty}^\infty$ a $\{\delta_k\}_{k=1}^\infty$ pseudo-orbit.  Given $\eta>0$
a point $x\in M$ is an $\eta$-shadowing point for the
$\{\delta_k\}_{k=1}^\infty$ pseudo-orbit  if $d(f^n(x),x_n)\leq
\eta\epsilon_{s_n},\forall n\in\mathbb{Z},$ where
$\epsilon_k=\epsilon_0e^{-\epsilon k}$ are given by the definition of Lyapunov neighborhoods.

\vskip0.3cm

\noindent \textbf{Weak shadowing lemma.} \cite{Hir,KatHas,Pol}
{\it
Let $f:M\to M$ be a $C^{1+\alpha}$ diffeomorphism, with a non-empty
Pesin set $\Lambda=\Lambda(\beta_1,\beta_2,\epsilon)$ and fixed
parameters, $\beta_1,\beta_2\gg \epsilon>0.$ For $\eta>0$ there exists
a sequence $\{\delta_k\}$ such that for any $\{\delta_k\}$ pseudo-orbit
there exists a unique $\eta$-shadowing point.
}
\vskip0.3cm

\begin{defn}{\rm\cite{Pes}}
Suppose $Z\subset X$ be an arbitrary Borel set and $\psi\in C(X)$.
Let $\Gamma_{n}(Z,\epsilon)$ be the
collection of all finite or countable covers of $Z$ by sets of the
form $B_{m}(x,\epsilon),$ with $m\geq n$. Let $S_{n}\psi(x):=\sum_{i=0}^{n-1}\psi(T^{i}x)$. Set
\begin{align*}
M(Z,t,\psi,n,\epsilon):=\inf_{\mathcal{C}\in\Gamma_{n}(Z,\epsilon)}\left\{\sum_{B_{m}(x,\epsilon)\in
\mathcal{C}}\exp (-tm+\sup_{y\in
B_{m}(x,\epsilon)}S_{m}\psi(y))\right\},
\end{align*}
and
\begin{align*}
M(Z,t,\psi,\epsilon)=\lim_{n\to\infty}M(Z,t,\psi,n,\epsilon).
\end{align*}
Then there exists a unique number $P(Z,\psi,\epsilon)$ such that
$$P(Z,\psi,\epsilon)=\inf\{t:M(Z,t,\psi,\epsilon)=0\}=\sup\{t:M(Z,t,\psi,\epsilon)=\infty\}.$$
$P(Z,\psi)=\lim_{\epsilon\to0}P(Z,\psi,\epsilon)$ is called
the topological pressure of $Z$ with respect to $\psi$.
\end{defn}

It is  obvious that the following hold:
\begin{enumerate}
  \item[(1)] $P(Z_1,\psi)\leq P(Z_2,\psi)$ for any $Z_1\subset Z_2\subset X$;
  \item[(2)] $P(Z,\psi)=\sup_i P(Z_i,\psi)$, where $Z=\bigcup_i Z_i\subset X$.
\end{enumerate}

Now, we state the main results of this paper as follows:
\begin{thm}\label{thm2.2}
Let $f:M\to M$ be a $C^{1+\alpha}$ diffeomorphism of a compact Riemannian manifold, with a non-empty
Pesin set $\Lambda=\Lambda(\beta_1,\beta_2,\epsilon)$ and fixed
parameters, $\beta_1,\beta_2\gg \epsilon>0$ and
let $\mu\in \mathscr{M}_{erg}(M,f)
$ be any ergodic measure. Then for every  $\nu\in \mathscr M_{\rm inv}(\widetilde{\Lambda}_{\mu},f),
\psi\in C^0(M),$ we have
\begin{align*}
P(G_\nu,\psi)=h_\nu(f)+\int \psi d\nu.
\end{align*}
\end{thm}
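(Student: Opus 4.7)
My plan is to prove both inequalities separately. For the upper bound $P(G_\nu,\psi)\leq h_\nu(f)+\int\psi\,d\nu$, I would first decompose $G_\nu=\bigcup_{N\geq 1}G_\nu^N$, where $G_\nu^N=\{x:|\tfrac{1}{n}S_n\psi(x)-\int\psi\,d\nu|<1/N\text{ for all }n\geq N\}$. On each $G_\nu^N$ and each Bowen ball $B_m(x,\epsilon)$ with $m\geq N$, the weight $\exp(\sup_{y\in B_m(x,\epsilon)}S_m\psi(y))$ appearing in the Carath\'eodory--Pesin sum is bounded by $\exp(m\int\psi\,d\nu+m/N+m\,\omega_\psi(\epsilon))$, where $\omega_\psi$ is the modulus of continuity of $\psi$. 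Combining this with the entropy estimate $h_{\rm top}(G_\nu^N,f)\leq h_{\rm top}(G_\nu,f)=h_\nu(f)$ from \cite{LiaLiaSUnTia}, and then letting $\epsilon\to 0$ followed by $N\to\infty$, yields the upper bound.

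The core of the theorem is the lower bound $P(G_\nu,\psi)\geq h_\nu(f)+\int\psi\,d\nu$, which I would establish via the Generalised Pressure Distribution Principle (Lemma 3.5). The strategy is to produce a Cantor-like set $F\subset G_\nu$ and a Borel probability measure $\mu^\ast$ on $F$, and then verify the hypothesis of the GPDP by estimating $\mu^\ast(B_n(x,\epsilon))$ from above.

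Fix $\delta>0$. By the ergodic decomposition $\nu=\int\tau\,d\sigma(\tau)$, where $\sigma$ is supported on $\mathscr M_{\rm erg}(\widetilde{\Lambda}_\mu,f)$, I would choose a finite rational convex combination $\nu_\delta=\sum_{i=1}^{N}\alpha_i\tau_i$ of ergodic measures $\tau_i\in\mathscr M_{\rm erg}(\widetilde{\Lambda}_\mu,f)$, close to $\nu$ in the weak-$\ast$ topology and satisfying
\[
\sum_{i=1}^{N}\alpha_i\Bigl(h_{\tau_i}(f)+\int\psi\,d\tau_i\Bigr)\geq h_\nu(f)+\int\psi\,d\nu-\delta.
\]
For each $i$, pick $k_i$ with $\tau_i(\Lambda_{k_i})>0$; a Katok-type argument applied to $\tau_i$ on the compact piece $\Lambda_{k_i}$ produces $(n_i,\epsilon)$-separated subsets $E_i\subset\Lambda_{k_i}$ with
\[
\sum_{x\in E_i}\exp(S_{n_i}\psi(x))\geq\exp\bigl(n_i(h_{\tau_i}(f)+\textstyle\int\psi\,d\tau_i-\delta)\bigr).
\]
I would then build long $\{\delta_k\}$-pseudo-orbits by concatenating segments drawn from $E_1,\dots,E_N$, with the times spent near each $\tau_i$ in proportions $\alpha_i$. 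Applying the Weak Shadowing Lemma yields unique $\eta$-shadowing orbits; since the Pesin indices $s_n$ along the pseudo-orbit remain within $\{k_i\}$ plus a slow drift, the shadowing scale $\eta\epsilon_{s_n}$ is uniformly controlled. Iterating the construction along a sequence $\delta_m\downarrow 0$ with increasingly fine weak-$\ast$ approximations $\nu_{\delta_m}\to\nu$ and taking the resulting nested intersection produces a Cantor set $F$ whose orbits have empirical measures actually converging to $\nu$, hence $F\subset G_\nu$.

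The measure $\mu^\ast$ is assembled stage by stage, distributing mass at each stage proportionally to $\exp(S_{n_i}\psi)$ across the choices within the relevant $E_i$. The main obstacle will be twofold: first, showing that distinct choice sequences produce shadowing points that are $(n,\epsilon)$-separated at a single scale $\epsilon>0$, so that Bowen balls of radius $\epsilon$ capture essentially one branch at each level; second, establishing the uniform upper bound $\mu^\ast(B_n(x,\epsilon))\leq\exp(S_n\psi(x)-n(h_\nu(f)+\int\psi\,d\nu-C\delta))$ for all large $n$, which requires carefully counting the branches passing through $B_n(x,\epsilon)$ and reconciling the varying length scales coming from different Pesin blocks. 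Once this bound is in place, the GPDP gives $P(F,\psi)\geq h_\nu(f)+\int\psi\,d\nu-C\delta$, and letting $\delta\to 0$ together with $F\subset G_\nu$ finishes the proof.
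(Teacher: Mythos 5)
Your overall route coincides with the paper's (finite rational convex combinations of ergodic measures from the ergodic decomposition, Katok-type separated sets, concatenated pseudo-orbits shadowed via the Weak Shadowing Lemma, a nested fractal $F\subset G_\nu$ carrying weighted atomic measures, and the Generalised Pressure Distribution Principle), but the two points you defer as ``obstacles'' are exactly where the proof lives, and as sketched they fail. First, simply concatenating segments drawn from $E_i\subset\Lambda_{k_i}$ does not produce a $\{\delta_k\}$-pseudo-orbit: the endpoint of one segment need not be $\delta_{s_n}$-close to the start of the next, and the index condition $|s_n-s_{n-1}|\leq 1$ has to be arranged as well. The paper builds this in from the start: the sets $\Lambda^n(m_{k,j})$ carry a return condition $f^q(x)\in\xi_k(x)$ for some $q\in[n,(1+\gamma)n]$ with respect to a partition $\xi_k$ of diameter less than $\delta_{l_k}/3$, the separated set is then cut down to $W_{n(k,j)}=V_{n(k,j)}\cap A_{k,j}$ inside a single partition element (the loss being absorbed into $g(\gamma)$ via $\exp(\gamma t_k)>\sharp\xi_k$), and transitions between different $m_{k,j}$ and between levels $k$ and $k+1$ are made by bridging orbit segments supplied by the ergodicity of $\mu$, with the bridge lengths $s(\cdot)$ kept negligible relative to $N_k,T_k$ so they do not spoil the empirical averages. ``The Pesin indices drift slowly'' is an assertion in your sketch, not a mechanism.

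Second, and more fundamentally, you tie the stage-$m$ counting to $h_{\tau_i}(f)+\int\psi\,d\tau_i-\delta$, which Katok's formula only delivers at a scale depending on $\tau_i$ (hence on the stage), since there is no expansivity; but the GPDP estimate must hold at one fixed scale $\epsilon'/8$ for all large $n$, and every large $n$ lands inside some late stage, so every stage must produce weighted $(n,\epsilon')$-separated sets at the same fixed $\epsilon'$ with rate at least $h_\nu(f)+\int\psi\,d\nu$ minus a small error; if instead you shrink the scale with the stage, the shadowed branches are no longer separated at the GPDP scale and the ball-counting collapses. The paper's missing-in-your-proposal device is precisely the fixed-scale Katok pressure of the non-ergodic $\nu$: it defines $P^{Kat}_\nu(f,\psi,\epsilon')$ by integrating $m\mapsto P^{Kat}_m(f,\psi,\epsilon')$ over the ergodic decomposition (Lemma 3.2 for measurability, dominated convergence for the limit as $\epsilon'\to0$), fixes $\epsilon'$ once so that $P^{Kat}_\nu(f,\psi,\epsilon')>P_\nu(f,\psi)-\gamma$, and chooses the ergodic representatives in Lemma 3.3 so that $P^{Kat}_\nu(f,\psi,\epsilon')\leq\sum_j a_{k,j}P^{Kat}_{m_{k,j}}(f,\psi,\epsilon')$ for every $k$; this makes the per-stage branch growth uniformly bounded below at the single scale, with $\epsilon'\to0$ and $\gamma\to0$ taken only at the very end. (Your upper-bound paragraph is fine: only the general inequality is needed there, and the paper simply quotes its pressure version from the literature.)
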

\begin{cro}
Let $f:M\to M$ be a $C^{1+\alpha}$ diffeomorphism of a compact Riemannian manifold
and let $\omega\in\mathscr M_{\rm erg}(M,f)$ be a hyperbolic measure. If
$\beta_1\leq\lambda^-(\omega)$ and $\beta_2\leq\lambda^+(\omega)$,
then for every  $\nu\in \mathscr M_{\rm inv}(\widetilde{\Lambda}_{\omega},f),
\psi\in C^0(M),$ we have
\begin{align*}
P(G_\nu,\psi)=h_\nu(f)+\int \psi d\nu.
\end{align*}
\end{cro}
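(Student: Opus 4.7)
The theorem splits into two inequalities. The upper bound $P(G_\nu,\psi) \le h_\nu(f) + \int\psi\,d\nu$ is the soft direction: on $G_\nu$ the empirical measures converge to $\nu$, so $\frac{1}{n}S_n\psi(x) \to \int\psi\,d\nu$ for every $x\in G_\nu$, and a pressure version of Bowen's entropy inequality (equivalently, the variational principle applied to the closure of $G_\nu$) delivers the bound without invoking any Pesin theory.

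The lower bound is where all the hyperbolic machinery is needed. My plan is to construct a Moran-type Cantor subset $F\subset G_\nu$ together with a probability measure $\mu^*$ supported on $F$ satisfying a uniform upper bound of the form $\mu^*(B_n(x,\epsilon')) \le C\exp\bigl(-n(h_\nu(f)+\int\psi\,d\nu-\delta)+S_n\psi(x)\bigr)$ for some $\epsilon'>0$. The Generalised Pressure Distribution Principle then yields $P(F,\psi)\ge h_\nu(f)+\int\psi\,d\nu-\delta$, and letting $\delta\to 0$ closes the inequality.

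First I would approximate $\nu$ in the weak-$*$ topology by a convex combination $\nu^{(N)}=\sum_{i=1}^{k}\alpha_i\nu_i$ of ergodic hyperbolic measures $\nu_i\in\mathscr M_{\rm erg}(\widetilde\Lambda_\mu,f)$ with rational weights, arranged so that $h_{\nu^{(N)}}(f)\to h_\nu(f)$ and $\int\psi\,d\nu^{(N)}\to\int\psi\,d\nu$; this is possible because $\widetilde\Lambda_\mu=\bigcup_l\widetilde\Lambda_l$ and each $\nu_i$ can be chosen to be supported in some compact Pesin block $\Lambda_{l_i}$. Next, for each $\nu_i$ the Katok entropy formula supplies large $(n_i,\epsilon)$-separated subsets $E_i\subset \Lambda_{l_i}$ of cardinality at least $\exp(n_i(h_{\nu_i}-\delta))$ consisting of points whose length-$n_i$ empirical measures are close to $\nu_i$ and along which $\frac{1}{n_i}S_{n_i}\psi\approx\int\psi\,d\nu_i$. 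Concatenating such collections according to the prescribed proportions $\alpha_i$, I obtain $\{\delta_k\}$-pseudo-orbits in $\Lambda(\beta_1,\beta_2,\epsilon)$ whose index sequences $\{s_n\}$ satisfy $|s_n-s_{n-1}|\le 1$ by a standard ordering of the $l_i$. Applying the Weak Shadowing Lemma produces unique $\eta$-shadowing points, and the Cantor-like set $F$ of all such shadowing points inherits a product-type probability measure $\mu^*$ obtained by assigning equal weight to each admissible branching at each stage.

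The main work is to verify: (i) points of $F$ lie in $G_\nu$, since the shadowing orbit's empirical measures track those of the concatenated pseudo-orbit which in turn track $\nu^{(N)}\to\nu$; (ii) distinct admissible sequences produce shadowing points that are $(n,\epsilon')$-separated for a uniform $\epsilon'$, so that the combinatorial count translates directly into the desired upper bound on $\mu^*(B_n(x,\epsilon'))$; and (iii) the Birkhoff sums of $\psi$ along the shadowing orbit remain close to $n\int\psi\,d\nu^{(N)}$, which follows from continuity of $\psi$ and the $\eta\epsilon_{s_n}$-closeness guaranteed by shadowing. The main obstacle is the bookkeeping of scales: since $\widetilde\Lambda_\mu$ is not compact and the Lyapunov neighbourhood sizes $\epsilon_k=\epsilon_0 e^{-\epsilon k}$ shrink with the Pesin index, the sequence $\{\delta_{s_n}\}$ and the concatenation gaps must be tuned to the index $l_i$ of the block hosting each $\nu_i$, and the separation constant $\epsilon'$ must be kept uniform across all stages of the Moran construction despite this shrinking. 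Once these calibrations are in place, the local measure estimate combined with the Generalised Pressure Distribution Principle finishes the proof.
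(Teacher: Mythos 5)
In the paper this corollary is not proved by a fresh construction: it is the remark already recorded in Section 2 that the hypotheses $\beta_1\leq\lambda^-(\omega)$, $\beta_2\leq\lambda^+(\omega)$ guarantee $\omega\in\mathscr M_{\rm inv}(\widetilde{\Lambda}_{\omega},f)$, so the Pesin set is non-empty and Theorem \ref{thm2.2} applies verbatim with $\mu=\omega$. Your proposal instead re-derives the theorem, and its skeleton (upper bound from a Bowen-type inequality for generic points; lower bound via rational convex combinations of ergodic measures, separated sets on Pesin blocks, concatenated pseudo-orbits, the weak shadowing lemma, a Moran-type fractal $F\subseteq G_\nu$, and the Generalised Pressure Distribution Principle) is the same as the paper's. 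One small slip in the easy direction: ``equivalently, the variational principle applied to the closure of $G_\nu$'' is not a valid justification, since $\overline{G_\nu}$ can be all of $M$ and then carries measures of larger pressure; the correct tool is the inequality $P(G_\nu,\psi)\leq h_\nu(f)+\int\psi\,d\nu$ for the set $G_\nu$ itself, which the paper quotes from \cite{PeiChe,ZhoChe}.

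The genuine gap is in the counting step of the lower bound. You ask Katok's formula to produce, at a scale that must stay \emph{uniform} throughout the construction, $(n_i,\epsilon)$-separated sets of cardinality at least $\exp(n_i(h_{\nu_i}-\delta))$ for every ergodic component $\nu_i$ occurring at every approximation level. For a fixed scale $\epsilon'$ this is false in general: Katok's formula recovers $h_{\nu_i}+\int\psi\,d\nu_i$ only in the limit $\epsilon\to 0$, and for fixed $\epsilon'$ the growth rate of separated (or spanning) sets, i.e.\ $P^{Kat}_{\nu_i}(f,\psi,\epsilon')$ in the paper's notation, can be strictly smaller (think of an ergodic measure carried by an invariant set of diameter less than $\epsilon'$). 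You cannot let the scale shrink with the level either, because the estimate of $\alpha_{k+p}(B_n(x,\epsilon'/8))$ requires branch points at \emph{all} levels to remain $\epsilon'$-separated after the shadowing loss, and you cannot choose $\epsilon'$ after the components, because the fractal of $\nu$-generic points must use infinitely many approximation levels along one orbit. The paper resolves precisely this quantifier problem: it defines the fixed-scale Katok pressure of the non-ergodic $\nu$ by integrating $P_m^{Kat}(f,\psi,\epsilon')$ over the ergodic decomposition, uses measurability (Lemma \ref{lem3.2}) and dominated convergence (formula (\ref{equ3.1})) to fix $\epsilon'$ with $P_\nu^{Kat}(f,\psi,\epsilon')>P_\nu(f,\psi)-\gamma$, and then (Lemma \ref{lem3.3}) picks the finitely many ergodic representatives $m_{k,j}$ \emph{adapted to this $\epsilon'$}, so that $\sum_j a_{k,j}P^{Kat}_{m_{k,j}}(f,\psi,\epsilon')\geq P_\nu^{Kat}(f,\psi,\epsilon')$; only the fixed-scale counts of these representatives enter the estimate of $\kappa_k$, never their full entropies. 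Without this device (or an equivalent), your chain ``cardinality $\geq e^{n_i(h_{\nu_i}-\delta)}$ at a uniform scale, hence GPDP gives $P_\nu(f,\psi)-\delta$'' does not go through; relatedly, the assertion $h_{\nu^{(N)}}(f)\to h_\nu(f)$ needs the ergodic-decomposition argument, not merely weak-$*$ approximation together with the choice of supports in Pesin blocks.
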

%%%%%%%%%%%%%%%%%%%%%%%%%%%%%%%%%%%%%%%%%%%%%%%%%%%%%%%%%%%%%%%%%%%%%%%%%
%%%%%%%%%%%%%%%%%%%%%%%%%%%%%%%%%%%%%%%%%%%%%%%%%%%%%%%%%%%%%%%%%%%%%%%%%
%%%%%%%%%%%%%%%%%%%%%%%%%%%%%%%%%%%%%%%%%%%%%%%%%%%%%%%%%%%%%%%%%%%%%%%%%
%%%%%%%%%%%%%%%%%%%%%%%%%%%%%%%%%%%%%%%%%%%%%%%%%%%%%%%%%%%%%%%%%%%%%%%%%
%%%%%%%%%%%%%%%%%%%%%%%%%%%%%%%%%%%%%%%%%%%%%%%%%%%%%%%%%%%%%%%%%%%%%%%%%
%%%%%%%%%%%%%%%%%%%%%%%%%%%%%%%%%%%%%%%%%%%%%%%%%%%%%%%%%%%%%%%%%%%%%%%%%
%%%%%%%%%%%%%%%%%%%%%%%%%%%%%%%%%%%%%%%%%%%%%%%%%%%%%%%%%%%%%%%%%%%%%%%%%
%%%%%%%%%%%%%%%%%%%%%%%%%%%%%%%%%%%%%%%%%%%%%%%%%%%%%%%%%%%%%%%%%%%%%%%%%
\section{Proof of Main Theorem}
%%%%%%%%%%%%%%%%%%%%%%%%%%%%%%%%%%%%%%%%%%%%%%%%%%%%%%%%%%%%%%%%%%%%%%%%%
%%%%%%%%%%%%%%%%%%%%%%%%%%%%%%%%%%%%%%%%%%%%%%%%%%%%%%%%%%%%%%%%%%%%%%%%%
%%%%%%%%%%%%%%%%%%%%%%%%%%%%%%%%%%%%%%%%%%%%%%%%%%%%%%%%%%%%%%%%%%%%%%%%%
In this section, we will verify theorem \ref{thm2.2}.
The upper bound on $P(G_\nu,\psi)$ is easy to get.
To obtain the lower bound estimate we need to construct
a suitable pseudo-orbit and a sequence of measures
to apply Generalised Pressure Distribution Principle.
Our method is inspired by \cite{LiaLiaSUnTia},
\cite{PfiSul} and \cite{Tho2}. The proof will be
divided into the following two subsections.

%%%%%%%%%%%%%%%%%%%%%%%%%%%%%%%%%%%%%%%%%%%%%%%%%%%%%%%%%%%%%%%%%%%%%%%%%
%%%%%%%%%%%%%%%%%%%%%%%%%%%%%%%%%%%%%%%%%%%%%%%%%%%%%%%%%%%%%%%%%%%%%%%%%
%%%%%%%%%%%%%%%%%%%%%%%%%%%%%%%%%%%%%%%%%%%%%%%%%%%%%%%%%%%%%%%%%%%%%%%%%
\subsection{Upper Bound on $P(G_\nu,\psi)$}
The upper bound of  $P(G_\nu,\psi)$ holds without extra assumption.
By \cite{PeiChe,ZhoChe}, we have
\begin{align*}
P(G_\nu,\psi)\leq h_\nu(f)+\int \psi d\nu.
\end{align*}

%%%%%%%%%%%%%%%%%%%%%%%%%%%%%%%%%%%%%%%%%%%%%%%%%%%%%%%%%%%%%%%%%%%%%%%%%
%%%%%%%%%%%%%%%%%%%%%%%%%%%%%%%%%%%%%%%%%%%%%%%%%%%%%%%%%%%%%%%%%%%%%%%%%
%%%%%%%%%%%%%%%%%%%%%%%%%%%%%%%%%%%%%%%%%%%%%%%%%%%%%%%%%%%%%%%%%%%%%%%%%
\subsection{Lower Bound on $P(G_\nu,\psi)$}
The aim of this section is to obtain the lower bound of $P(G_\nu,\psi)$.
Our tool is Generalised Pressure Distribution Principle.
%%%%%%%%%%%%%%%%%%%%%%%%%%%%%%%%%%%%%%%%%%%%%%%%%%%%%%%%%%%%%%%%%%%%%%%%%
%%%%%%%%%%%%%%%%%%%%%%%%%%%%%%%%%%%%%%%%%%%%%%%%%%%%%%%%%%%%%%%%%%%%%%%%%
%%%%%%%%%%%%%%%%%%%%%%%%%%%%%%%%%%%%%%%%%%%%%%%%%%%%%%%%%%%%%%%%%%%%%%%%%
%%%%%%%%%%%%%%%%%%%%%%%%%%%%%%%%%%%%%%%%%%%%%%%%%%%%%%%%%%%%%%%%%%%%%%%%%

\subsubsection{Katok's Definition of Measure-theoretic Pressure}
For $\nu\in\mathscr{M}_{inv}(M,f)$ and $\psi\in C^0(M)$, the
measure-theoretic pressure of $f$ respect to $\psi$ and $\nu$ is
\begin{align*}
P_{\nu}(f,\psi):=h_{\nu}(f)+\int\psi d\nu.
\end{align*}
We use the Katok's definition of measure-theoretic pressure based on the following
lemma.
\begin{lem}{\rm\cite{Men}}\label{lem3.1}
Let $(M,d)$ be a compact metric space, $f:M\to M$ be a continuous map and $\mu$
be an ergodic invariant measure. For $\epsilon>0$, $\delta\in (0,1)$ and $\psi\in C^0(M)$, define
\begin{align*}
N^\mu(\psi,\delta,\epsilon,n):=\inf\left\{\sum_{x\in S}\exp\left\{\sum_{i=0}^{n-1}\psi(f^ix)\right\}\right\},
\end{align*}
where the infimum is taken over all sets $S$ which $(n,\epsilon)$ span some set $Z$ with $\mu(Z)>1-\delta$.
We have
\begin{align*}
h_\mu(f)+\int\psi d\mu=\lim_{\epsilon\to0}\liminf_{n\to\infty}\frac{1}{n}\log N^\mu(\psi,\delta,\epsilon,n).
\end{align*}
The formula remains true if we replace the $\liminf$ by $\limsup$.
\end{lem}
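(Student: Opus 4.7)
The plan is to reduce this Katok-style pressure formula to two classical ingredients applied to the ergodic measure $\mu$: the Brin--Katok local entropy formula and Birkhoff's ergodic theorem. Write $P := h_\mu(f) + \int\psi\,d\mu$. Since $\liminf\leq\limsup$, it suffices to establish the two one-sided inequalities
\[
\lim_{\epsilon\to 0}\limsup_{n\to\infty}\tfrac{1}{n}\log N^\mu(\psi,\delta,\epsilon,n) \leq P
\quad\text{and}\quad
\lim_{\epsilon\to 0}\liminf_{n\to\infty}\tfrac{1}{n}\log N^\mu(\psi,\delta,\epsilon,n) \geq P,
\]
which together pinch both quantities to $P$. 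Fix $\gamma>0$. Brin--Katok gives, for $\mu$-a.e.\ $x$, that $-\tfrac{1}{n}\log\mu(B_n(x,\epsilon))\to h_\mu(f)$ in the iterated limit $n\to\infty$ then $\epsilon\to 0$, and Birkhoff gives $\tfrac{1}{n}S_n\psi(x)\to\int\psi\,d\mu$. An Egorov-type argument then produces, for every sufficiently small $\epsilon>0$, a set $A_\epsilon$ with $\mu(A_\epsilon)>1-\delta/2$ and an integer $N_\epsilon$ such that for all $x\in A_\epsilon$ and all $n\geq N_\epsilon$,
\[
e^{-n(h_\mu(f)+\gamma)} \leq \mu(B_n(x,\epsilon)) \leq e^{-n(h_\mu(f)-\gamma)},
\qquad
\bigl|S_n\psi(x)-n\textstyle\int\psi\,d\mu\bigr|<n\gamma.
\]

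For the \emph{upper bound}, I would take a maximal $(n,2\epsilon)$-separated subset $S\subset A_\epsilon$, which by maximality is automatically $(n,2\epsilon)$-spanning for $A_\epsilon$ (a set of measure $>1-\delta$). The balls $\{B_n(x,\epsilon)\}_{x\in S}$ are then pairwise disjoint, so
\[
|S|\,e^{-n(h_\mu(f)+\gamma)} \leq \sum_{x\in S}\mu(B_n(x,\epsilon)) \leq 1,
\]
giving $|S|\leq e^{n(h_\mu(f)+\gamma)}$. Combined with the Birkhoff estimate, $\sum_{x\in S}e^{S_n\psi(x)}\leq |S|\,e^{n\int\psi\,d\mu+n\gamma}\leq e^{n(P+2\gamma)}$, so $N^\mu(\psi,\delta,2\epsilon,n)\leq e^{n(P+2\gamma)}$ for $n\geq N_\epsilon$; letting $n\to\infty$, $\gamma\to 0$, $\epsilon\to 0$ yields the desired upper inequality.

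For the \emph{lower bound}, let $S$ be any $(n,\epsilon)$-spanning set of some $Z$ with $\mu(Z)>1-\delta$. Retain only $S':=\{x\in S:B_n(x,\epsilon)\cap A_{2\epsilon}\neq\emptyset\}$; its $\epsilon$-balls still cover $Z\cap A_{2\epsilon}$, which has $\mu$-mass exceeding $1-3\delta/2$. For each $x\in S'$ pick $y_x\in B_n(x,\epsilon)\cap A_{2\epsilon}$; then $B_n(x,\epsilon)\subset B_n(y_x,2\epsilon)$ forces $\mu(B_n(x,\epsilon))\leq e^{-n(h_\mu(f)-\gamma)}$, while $|S_n\psi(x)-S_n\psi(y_x)|\leq n\,\mathrm{var}(\psi,\epsilon)$ gives $S_n\psi(x)\geq n\int\psi\,d\mu-n\gamma-n\,\mathrm{var}(\psi,\epsilon)$. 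Summing, $1-3\delta/2\leq |S'|\,e^{-n(h_\mu(f)-\gamma)}$ and
\[
\sum_{x\in S}e^{S_n\psi(x)} \geq (1-3\delta/2)\,e^{n\left(P-2\gamma-\mathrm{var}(\psi,\epsilon)\right)},
\]
which after taking $\log$, $n\to\infty$, $\gamma\to 0$, $\epsilon\to 0$ (using uniform continuity of $\psi$) delivers the matching lower bound.

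The main technical obstacle is precisely the Egorov step: the Brin--Katok limit is a double limit whose rate depends on $x$, so making the measure-of-Bowen-ball estimate uniform on a set of measure $>1-\delta/2$ while simultaneously coupling two different radii ($\epsilon$ for the covering and $2\epsilon$ for the Brin--Katok estimate applied through the auxiliary points $y_x$) requires careful bookkeeping. The remaining ingredients---disjointness of separated balls, the Birkhoff averaging of $\psi$, and the modulus-of-continuity comparison $|S_n\psi(x)-S_n\psi(y)|\leq n\,\mathrm{var}(\psi,\epsilon)$ on a Bowen ball---are entirely routine.
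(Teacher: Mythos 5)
The paper offers no proof of this lemma at all --- it is quoted from Mendoza \cite{Men} as a known pressure version of Katok's entropy formula --- so your argument is an independent proof rather than a reconstruction, and it is essentially correct. Where Katok's and Mendoza's original arguments run through the Shannon--McMillan--Breiman theorem and a combinatorial counting with partitions, you derive both inequalities from a single two-sided Bowen-ball estimate via Brin--Katok, Birkhoff and an Egorov-type uniformization; this buys a shorter and more symmetric proof (separated-ball disjointness for the upper bound, a covering count for the lower bound), at the price of invoking Brin--Katok as a black box. Two points should be made explicit. First, the lower bound $\mu(B_n(x,\epsilon))\geq e^{-n(h_\mu(f)+\gamma)}$ for \emph{fixed} $\epsilon$ is not literally the Brin--Katok statement; it follows because $\limsup_n-\frac1n\log\mu(B_n(x,\epsilon))$ is nondecreasing as $\epsilon\downarrow0$ with a.e.\ limit $h_\mu(f)$, hence is $\leq h_\mu(f)$ for every $\epsilon$ --- say so, since the whole upper bound rests on it. Second, a small parameter slip: with $\mu(A_{2\epsilon})>1-\delta/2$ your estimate $\mu(Z\cap A_{2\epsilon})>1-3\delta/2$ is vacuous when $\delta\geq 2/3$, so the lower-bound argument as written fails for large $\delta$; the fix is trivial --- take $\mu(A_{2\epsilon})>1-\frac{1-\delta}{2}$, so that $\mu(Z\cap A_{2\epsilon})>\frac{1-\delta}{2}>0$, which is all you need since the constant prefactor disappears after taking $\frac1n\log$ and $n\to\infty$. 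With these two clarifications the proof is complete and correct.
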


For $\epsilon>0$ and $\nu\in\mathscr{M}_{erg}(M,f)$, we define
\begin{align*}
P_\nu^{Kat}(f,\psi,\epsilon):=\liminf_{n\to\infty}\frac{1}{n}\log N^\nu(\psi,\delta,\epsilon,n).
\end{align*}
Then by lemma \ref{lem3.1},
\begin{align*}
P_\nu(f,\psi)=\lim_{\epsilon\to0}P_\nu^{Kat}(f,\psi,\epsilon).
\end{align*}
If $\nu$ is non-ergodic, we will define $P_\nu^{Kat}(f,\psi,\epsilon)$ by the ergodic
decomposition of $\nu$. The following lemma is necessary.

\begin{lem}\label{lem3.2}
Fix $\epsilon,\delta>0$ and $n\in\mathbb{N}$, the function $s:\mathscr{M}_{erg}(M,f)\to\mathbb{R}$
defined by $\nu\mapsto N^\nu(\psi,\delta,\epsilon,n)$ is upper semi-continuous.
\begin{proof}
Let $\nu_k\to\nu$. Let $a>N^\nu(\psi,\delta,\epsilon,n)$, then there exists a set $S$
which $(n,\epsilon)$ span some set $Z$ with $\nu(Z)>1-\delta$ such that
\begin{align*}
a>\sum_{x\in S}\exp\left\{\sum_{i=0}^{n-1}\psi(f^ix)\right\}.
\end{align*}
If $k$ is large enough, then $\nu_k(\bigcup_{x\in S}B_n(x,\epsilon))>1-\delta$, which implies that
\begin{align*}
a>N^{\nu_{k}}(\psi,\delta,\epsilon,n).
\end{align*}
Thus we obtain
\begin{align*}
N^\nu(\psi,\delta,\epsilon,n)\geq\limsup_{k\to\infty}N^{\nu_{k}}(\psi,\delta,\epsilon,n),
\end{align*}
which completes the proof.
\end{proof}
\end{lem}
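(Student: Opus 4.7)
The plan is to verify upper semi-continuity directly from the definition of weak convergence of measures, exploiting the fact that Bowen dynamical balls are open sets. Concretely, given a weakly convergent sequence $\nu_k \to \nu$ in $\mathscr{M}_{erg}(M,f)$, I want to show $\limsup_{k \to \infty} N^{\nu_k}(\psi,\delta,\epsilon,n) \leq N^{\nu}(\psi,\delta,\epsilon,n)$, so that any $(n,\epsilon)$-spanning set $S$ which is almost optimal for $\nu$ remains admissible for $\nu_k$ for all large $k$ with essentially the same exponential sum.

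The steps would proceed as follows. First, fix an arbitrary $a > N^{\nu}(\psi,\delta,\epsilon,n)$ and, using the definition of the infimum, select a spanning set $S$ together with a Borel set $Z$ satisfying $\nu(Z) > 1-\delta$ and $\sum_{x \in S} \exp(S_n\psi(x)) < a$. Second, pass from $Z$ to the open enlargement $U := \bigcup_{x \in S} B_n(x,\epsilon)$; this set is open because continuity of $f$ makes each $B_n(x,\epsilon) = \bigcap_{i=0}^{n-1} f^{-i}(B(f^i x,\epsilon))$ a finite intersection of preimages of open balls. Since $Z \subset U$, we still have $\nu(U) > 1-\delta$. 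Third, invoke the Portmanteau theorem: weak convergence gives $\liminf_k \nu_k(U) \geq \nu(U) > 1-\delta$, so $\nu_k(U) > 1-\delta$ for all sufficiently large $k$. The same $S$ then $(n,\epsilon)$-spans $U$, a set of $\nu_k$-measure exceeding $1-\delta$, yielding $N^{\nu_k}(\psi,\delta,\epsilon,n) \leq \sum_{x\in S}\exp(S_n\psi(x)) < a$. Taking $\limsup$ in $k$ and then letting $a \downarrow N^{\nu}(\psi,\delta,\epsilon,n)$ closes the argument.

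The main (essentially only) subtlety is the tension between the definition, which allows an arbitrary Borel witness $Z$, and weak convergence, which controls measures only of open or closed sets. One cannot transfer $\nu(Z) > 1-\delta$ to $\nu_k$ directly because $Z$ need be neither open nor closed. The trick of replacing $Z$ by the open set $U$ swept out by the spanning balls resolves this and is exactly what forces one-sided rather than two-sided continuity: enlarging $Z$ increases the measure, which aligns with Portmanteau's $\liminf$ inequality for open sets but would fail in the opposite direction. Notably, no hyperbolicity, Pesin block structure, or property of $\psi$ beyond continuity enters the argument; the lemma is a soft statement about weak convergence interacting with Bowen balls.
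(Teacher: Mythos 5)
Your proposal is correct and is essentially the paper's own argument: the paper also fixes $a>N^{\nu}(\psi,\delta,\epsilon,n)$, passes to the open union $\bigcup_{x\in S}B_n(x,\epsilon)$, and uses weak convergence to get $\nu_k$-measure greater than $1-\delta$ for large $k$. You simply make explicit the two points the paper leaves tacit (openness of the Bowen balls and the Portmanteau inequality), which is a welcome clarification but not a different route.
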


Lemma \ref{lem3.2} tells us that the function $\overline{s}:\mathscr{M}_{erg}(M,f)\to\mathbb{R}$
defined by $\overline{s}(m)=P_m^{Kat}(f,\psi,\epsilon)$ is measurable.
Assume $\nu=\int_{\mathscr{M}_{erg}(M,f)}md\tau(m)$ is the ergodic decomposition of $\nu$.
Define
\begin{align*}
P_\nu^{Kat}(f,\psi,\epsilon):=\int_{\mathscr{M}_{erg}(M,f)}P_m^{Kat}(f,\psi,\epsilon)d\tau(m).
\end{align*}
We remark that for all $\nu\in \mathscr{M}_{inv}(M,f)$,
\begin{align*}
-\|\psi\|\leq P_{\nu}^{Kat}(f,\psi,\epsilon)\leq h_{top}(f)+\|\psi\|,
\end{align*}
where $h_{top}(f)$ is the topological entropy of $f$. Thus there exists $H>0$ such that
\begin{align*}
|P_{\nu}^{Kat}(f,\psi,\epsilon)|\leq H.
\end{align*}
By dominated convergence theorem, we have
\begin{align}\label{equ3.1}
P_\nu(f,\psi)=\int_{\mathscr{M}_{erg}(M,f)}\lim_{\epsilon\to0}P_m^{Kat}(f,\psi,\epsilon)d\tau(m)
=\lim_{\epsilon\to0}P_\nu^{Kat}(f,\psi,\epsilon).
\end{align}

%%%%%%%%%%%%%%%%%%%%%%%%%%%%%%%%%%%%%%%%%%%%%%%%%%%%%%%%%%%%%%%%%%%%%%%%%
%%%%%%%%%%%%%%%%%%%%%%%%%%%%%%%%%%%%%%%%%%%%%%%%%%%%%%%%%%%%%%%%%%%%%%%%%
%%%%%%%%%%%%%%%%%%%%%%%%%%%%%%%%%%%%%%%%%%%%%%%%%%%%%%%%%%%%%%%%%%%%%%%%%
%%%%%%%%%%%%%%%%%%%%%%%%%%%%%%%%%%%%%%%%%%%%%%%%%%%%%%%%%%%%%%%%%%%%%%%%%

\subsubsection{Some Lemmas}
For $\mu,\nu\in \mathscr{M}(M),$ define a compatible metric $D$ on
$\mathscr{M}(M)$ as follows:
\begin{align*}
D(\mu,\nu):=\sum\limits_{i\geq1}\frac{|\int\varphi_id\mu-\int\varphi_id\nu|}{2^{i+1}\|\varphi_i\|}
\end{align*}
where $\{\varphi_i\}_{i=1}^\infty$ is the dense subset of
$C^0(M)$. It is obvious that $D(\mu,\nu)\leq1$ for any $\mu,\nu\in \mathscr{M}(M).$
For any integer $k\geq1$ and $\varphi_1,\cdots,\varphi_k,$ there exists
$b_k>0$ such that
\begin{align*}
d(\varphi_j(x),\varphi_j(y))<\frac{1}{k}\|\varphi_j\|
\end{align*}
for any $d(x,y)<b_k, 1\leq j\leq k.$

\begin{lem}\label{lem3.3}
For any integer $k\geq1$ and invariant measure
$\nu\in\mathscr{M}_{inv}(\widetilde{\Lambda}_{\mu},f)$,
there exists a finite convex combination of ergodic probability measures with
rational coefficients $\mu_k=\sum\limits_{j=1}^{s_k}a_{k,j}m_{k,j}$ such that
\begin{align*}
D(\nu,\mu_k)\leq\frac{1}{k}, m_{k,j}(\widetilde{\Lambda}_{\mu})=1, ~{\rm and} ~
P_{\nu}^{Kat}(f,\psi,\epsilon)\leq\sum_{j=1}^{s_{k}}a_{k,j}P_{m_{k,j}}^{Kat}(f,\psi,\epsilon).
\end{align*}
\begin{proof}
Let
\begin{align*}
\nu=\int_{\mathscr{M}_{erg}(\widetilde{\Lambda}_{\mu},f)}md\tau(m)
\end{align*}
be the ergodic decomposition of $\nu$. Choose $N$ large enough such that
\begin{align*}
\sum\limits_{n=N+1}^{\infty}\frac{|\int\varphi_nd\mu-\int\varphi_nd\nu|}{2^{n+1}\|\varphi_n\|}<\frac{1}{3k}.
\end{align*}
We may assume that $\varphi_n\neq0$ for $n=1,\cdots,N$.
We choose $\zeta>0$ such that $D(\nu_1,\nu_2)<\zeta$ implies that
\begin{align*}
\left|\int \varphi_n d\nu_1-\int \varphi_n d\nu_2\right|<\frac{\|\varphi_n\|}{3k},n=1,2,\cdots,N.
\end{align*}
Let $\{A_{k,1},A_{k,2},\cdots,A_{k,s_k}\}$ be a partition of $\mathscr{M}_{erg}(\widetilde{\Lambda}_{\mu},f)$
with diameter smaller than $\zeta$. For any $A_{k,j}$ there exists an ergodic $m_{k,j}\in A_{k,j}$ such that
\begin{align*}
\int_{A_{k,j}}P_m^{Kat}(f,\psi,\epsilon)d\tau(m)
\leq\tau(A_{k,j})P_{m_{k,j}}^{Kat}(f,\psi,\epsilon).
\end{align*}
Obviously $m_{k,j}(\widetilde{\Lambda}_{\mu})=1$ and
$P_{\nu}^{Kat}(f,\psi,\epsilon)\leq\sum_{j=1}^{s_{k}}\tau(A_{k,j})P_{m_{k,j}}^{Kat}(f,\psi,\epsilon)$.
Let us choose rational numbers $a_{k,j}>0$
such that
$$|a_{k,j}-\tau(A_{k,j})|<\frac{1}{3ks_k}$$
and
$$P_{\nu}^{Kat}(f,\psi,\epsilon)\leq\sum_{j=1}^{s_{k}}a_{k,j}P_{m_{k,j}}^{Kat}(f,\psi,\epsilon).$$
Let
\begin{align*}
\mu_k=\sum\limits_{j=1}^{s_k}a_{k,j}m_{k,j}.
\end{align*}
By ergodic decomposition theorem, one can readily verify that
\begin{align*}
\left|\int\varphi_n d\nu-\int \varphi_n d\mu_k\right|\leq\frac{2\|\varphi_n\|}{3k},n=1,\cdots,N.
\end{align*}
Thus, we obtain
\begin{align*}
D(\nu,\mu_k)\leq\frac{1}{k}.
\end{align*}
\end{proof}
\end{lem}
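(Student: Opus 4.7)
\medskip
\noindent\textbf{Proof proposal.} The plan is to start from the ergodic decomposition
\[
\nu=\int_{\mathscr M_{\mathrm{erg}}(\widetilde\Lambda_\mu,f)} m\,d\tau(m)
\]
and build $\mu_k$ by discretising this integral over a suitably fine finite partition of the space of ergodic measures carried by $\widetilde\Lambda_\mu$. For the metric bound, I first truncate the series defining $D$ at an index $N=N(k)$ large enough that the tail $\sum_{n>N}|\int\varphi_n d\nu-\int\varphi_n d\mu_k|/(2^{n+1}\|\varphi_n\|)$ is automatically at most $1/(3k)$, uniformly in $\mu_k$; this reduces the approximation problem to a finite-dimensional one involving only the functionals $m\mapsto\int\varphi_n\,dm$ for $n\le N$.

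Next I choose $\zeta>0$ small enough that $D(\nu_1,\nu_2)<\zeta$ forces $|\int\varphi_n d\nu_1-\int\varphi_n d\nu_2|<\|\varphi_n\|/(3k)$ for $n\le N$, and partition $\mathscr M_{\mathrm{erg}}(\widetilde\Lambda_\mu,f)$ into finitely many Borel pieces $A_{k,1},\ldots,A_{k,s_k}$ of diameter less than $\zeta$ (possible because the ambient metric space is separable). On each piece $A_{k,j}$ with $\tau(A_{k,j})>0$, use Lemma \ref{lem3.2} (which shows $m\mapsto P_m^{Kat}(f,\psi,\epsilon)$ is measurable) to pick an ergodic $m_{k,j}\in A_{k,j}$ with $P_{m_{k,j}}^{Kat}(f,\psi,\epsilon)\ge\tau(A_{k,j})^{-1}\int_{A_{k,j}}P_m^{Kat}(f,\psi,\epsilon)\,d\tau(m)$; such a representative exists because the integrand cannot exceed its own average on every point of positive measure. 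Summing over $j$ yields the clean inequality $P_\nu^{Kat}(f,\psi,\epsilon)\le\sum_j\tau(A_{k,j})P_{m_{k,j}}^{Kat}(f,\psi,\epsilon)$, and the barycentric convex combination $\sum_j\tau(A_{k,j})m_{k,j}$ satisfies $D(\nu,\cdot)<2/(3k)$ by ergodic decomposition together with the choice of $\zeta$ and $N$.

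Finally, I replace the weights $\tau(A_{k,j})$ by rationals $a_{k,j}\ge 0$ summing to $1$. This is the one step that needs a little care: I want simultaneously $|a_{k,j}-\tau(A_{k,j})|<1/(3ks_k)$ (to protect the $D$-bound, which then becomes $1/k$) and preservation of the pressure inequality. The pressure inequality is preserved because each $P_m^{Kat}(f,\psi,\epsilon)$ is bounded by the constant $H$ of the preceding remark, so the rational perturbation can only change the sum by at most $H/(3k)$; to absorb this error, I simply refine $m_{k,j}$ from the start so that $P_{m_{k,j}}^{Kat}\ge\tau(A_{k,j})^{-1}\int_{A_{k,j}}P_m^{Kat}\,d\tau+\eta_k$ with a small slack $\eta_k$, which is still possible because the essential supremum of $P_m^{Kat}$ over $A_{k,j}$ is at least its average. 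Alternatively, one can bias the rational rounding in the direction of larger $P_{m_{k,j}}^{Kat}$.

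The main obstacle I expect is precisely this last reconciliation of the two inequalities under the rational-coefficient constraint: the measurability (not continuity) of $m\mapsto P_m^{Kat}(f,\psi,\epsilon)$ forces the arguments above to be carried out with essential suprema and margins rather than by direct maximisation, and the index $s_k$ of the partition has to be fixed before the rational approximation so that the slack $1/(3ks_k)$ is genuinely uniform. Once these bookkeeping choices are made in the right order, the construction assembles automatically, and the key properties $m_{k,j}(\widetilde\Lambda_\mu)=1$ are inherited from the fact that the ergodic decomposition lives on $\mathscr M_{\mathrm{erg}}(\widetilde\Lambda_\mu,f)$.
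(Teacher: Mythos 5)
Your proposal is correct and follows essentially the same route as the paper: ergodic decomposition, truncation of the series defining $D$ at a level $N$, a partition of $\mathscr M_{\mathrm{erg}}(\widetilde\Lambda_\mu,f)$ of diameter $<\zeta$, representatives $m_{k,j}\in A_{k,j}$ whose pressure dominates the $\tau$-average over $A_{k,j}$, and rational rounding of the weights. One remark on your last step: the ``extra slack $\eta_k$'' variant can fail (the essential supremum of $P_m^{Kat}$ on $A_{k,j}$ may equal its average, e.g.\ if it is $\tau$-a.e.\ constant there), so the correct fix is your alternative of biasing the rational rounding toward the indices with larger $P_{m_{k,j}}^{Kat}$, which is exactly what justifies the paper's simultaneous choice of the $a_{k,j}$.
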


\begin{lem}{\rm \cite{Boc}}\label{lem3.4}
Let $f:M\to M$ be a $C^{1}$ diffeomorphism of a compact Riemannian manifold and $\mu\in\mathscr M_{\rm inv}(M,f)$.
Let $\Gamma\subseteq M$ be a measurable set with $\mu(\Gamma)>0$ and let
\begin{align*}
\Omega=\bigcup_{n\in \mathbb{Z}}f^n(\Gamma).
\end{align*}
Take $\gamma>0$. Then there exists a measurable function $N_0:\Omega\to \mathbb{N}$ such that for a.e.$x\in\Omega$
and every $t\in[0,1]$ there is some $l\in\{0,1,\cdots,n\}$ such that $f^l(x)\in\Gamma$ and
$\left|(l/n)-t\right|<\gamma$.
\end{lem}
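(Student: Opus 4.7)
The statement is purely ergodic-theoretic (the $C^{1+\alpha}$ hypothesis on $f$ plays no role here), and the natural route is through Birkhoff's ergodic theorem applied to $\chi_\Gamma$. The plan is to reduce to the ergodic case via the ergodic decomposition, extract a measurable rate of convergence in Birkhoff's theorem, and then translate ``positive visit density'' into ``no gap longer than $\gamma n$'' inside $\{0,1,\ldots,n\}$.

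First I would write $\mu=\int_{\mathscr M_{\rm erg}(M,f)} m\,d\tau(m)$. Since $\Omega$ is $f$-invariant, each ergodic $m$ satisfies $m(\Omega)\in\{0,1\}$, and whenever $m(\Omega)=1$ the identity $\Omega=\bigcup_{n\in\mathbb Z}f^n(\Gamma)$ combined with $f$-invariance of $m$ forces $\rho_m:=m(\Gamma)>0$. Letting $E$ be the collection of such components gives $\tau(E)=\mu(\Omega)$.

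Next, Birkhoff applied to $\chi_\Gamma$ produces, for each $m\in E$ and $m$-a.e. $x$, the convergence $S_k(x)/k\to\rho(x):=\rho_m$, where $S_k(x):=\sum_{i=0}^{k-1}\chi_\Gamma(f^ix)$. Pooling these sets over $m\in E$ against $\tau$ yields a $\mu$-full-measure subset of $\Omega$ on which $\rho$ is measurable and positive. I fix $\varepsilon(x):=\rho(x)\gamma/4$ and take $K(x)$ to be the least integer such that $|S_k(x)-\rho(x)k|<\varepsilon(x)k$ for every $k\ge K(x)$; measurability of $K$ is clear from its description as a first-entry time. Set $N_0(x):=\lceil 8K(x)/(\rho(x)\gamma)\rceil$. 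For $n\ge N_0(x)$ and $t\in[0,1]$ the set $J_{t,n}:=\{l\in\{0,\ldots,n\}:|l/n-t|<\gamma\}$ is an integer interval $[a,b)$ of length at least $\gamma n$, and a short computation using the Birkhoff estimate together with the trivial bound $S_a(x)\le\max(a,K(x))$ gives
\[
\#\{l\in J_{t,n}:f^lx\in\Gamma\}=S_b(x)-S_a(x)\ge\rho(x)(b-a)-2\varepsilon(x)n-K(x)\ge\rho(x)\gamma n/4>0,
\]
supplying the required $l$.

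The only delicate point--which I expect to be the main obstacle--is matching the constants so that the endpoint cases $t=0,1$ still work: there $J_{t,n}$ is only $\gamma n$ rather than $2\gamma n$ long, which is why $\varepsilon(x)$ carries the factor $1/4$ and $N_0(x)$ the prefactor $8/(\rho(x)\gamma)$. Beyond this the argument is routine bookkeeping: Birkhoff plus ergodic decomposition plus a quantitative density estimate, packaged into a \emph{measurable} threshold $N_0$ that is uniform in $t\in[0,1]$.
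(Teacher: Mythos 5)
The paper does not prove this lemma at all --- it is quoted verbatim (with a typo) from Bochi \cite{Boc} --- so there is no internal proof to compare against; what matters is whether your argument stands on its own, and it does. You correctly read the statement as ``for every $n\geq N_0(x)$'' (in the printed version $n$ is left unquantified, which is clearly an omission), and your route --- Birkhoff's theorem for $\chi_\Gamma$, a measurable threshold $K(x)$ past which the ergodic averages are within $\varepsilon(x)=\rho(x)\gamma/4$ of the limit, and the counting identity $\#\{l\in[a,b):f^lx\in\Gamma\}=S_b(x)-S_a(x)$ applied to the integer window $\{l:|l/n-t|<\gamma\}$ of length at least about $\gamma n$ --- is exactly the standard proof of this fact; the constants you chose do close the endpoint cases $t=0,1$, and the $\pm1$ integer-rounding slack is absorbed by the margin $\rho(x)\gamma n/8\geq K(x)$. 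Two small remarks: the ergodic decomposition is not actually needed, since the Birkhoff limit $\rho$ is $f$-invariant and, for the invariant set $A=\{\rho=0\}$, the identity $\int_A\rho\,d\mu=\mu(\Gamma\cap A)$ forces $\mu(\Gamma\cap A)=0$ and hence $\mu(\Omega\cap A)=0$, which gives $\rho>0$ a.e.\ on $\Omega$ directly; and, as you note, the $C^{1}$ (or $C^{1+\alpha}$) diffeomorphism hypothesis is irrelevant --- the lemma holds for any invertible measure-preserving system, invertibility entering only through the definition of $\Omega$.
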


\begin{lem}{\rm\cite{Tho,Tho2}}\label{lem3.5}
(Generalised Pressure Distribution Principle) Let $(M,d,f)$ be a
topological dynamical system. Let $Z\subset M$ be an arbitrary Borel
set. Suppose there exist $\epsilon>0$ and $s\geq0$ such that one can
find a sequence of Borel probability measures $\mu_k,$ a constant
$K>0$ and an integer $N$ satisfying
\begin{align*}
\limsup\limits_{k\to\infty}\mu_k(B_n(x,\epsilon))\leq
K\exp(-ns+\sum\limits_{i=0}^{n-1}\psi(f^ix))
\end{align*}
for every ball $B_n(x,\epsilon)$ such that $B_n(x,\epsilon)\cap
Z\neq\emptyset$ and $n\geq N.$ Furthermore, assume that at least one
limit measure $\nu$ of the sequence $\mu_k$ satisfies $\nu(Z)>0.$
Then $P(Z, \psi,\epsilon)\geq s.$
\end{lem}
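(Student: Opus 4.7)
The plan is to adapt Bowen's mass distribution principle to the pressure setting. From the definition, $P(Z,\psi,\epsilon) = \inf\{t : M(Z,t,\psi,\epsilon) = 0\}$, so to obtain $P(Z,\psi,\epsilon) \geq s$ it suffices to show $M(Z,s,\psi,\epsilon) > 0$. This reduces the lemma to producing a uniform positive lower bound, independent of $N$, for
\begin{align*}
\sum_{B_m(x,\epsilon) \in \mathcal{C}} \exp\Bigl(-ms + \sup_{y \in B_m(x,\epsilon)} S_m\psi(y)\Bigr)
\end{align*}
as $\mathcal{C}$ ranges over covers in $\Gamma_N(Z,\epsilon)$.

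Fix such a cover $\mathcal{C} = \{B_{m_i}(x_i,\epsilon)\}_i$ with each $m_i \geq N$. Any ball disjoint from $Z$ may be discarded without altering the covering of $Z$ and only decreases the sum, so I may assume every $B_{m_i}(x_i,\epsilon)$ meets $Z$; the hypothesis of the lemma then applies to each ball in $\mathcal{C}$. Extracting a subsequence $\mu_{k_j}$ converging weakly$^*$ to a limit measure $\nu$ with $\nu(Z) > 0$, and using that each Bowen ball is open, the Portmanteau inequality yields
\begin{align*}
\nu(B_{m_i}(x_i,\epsilon))
&\leq \liminf_{j\to\infty}\mu_{k_j}(B_{m_i}(x_i,\epsilon)) \\
&\leq \limsup_{k\to\infty}\mu_k(B_{m_i}(x_i,\epsilon)) \\
&\leq K\exp\bigl(-m_i s + S_{m_i}\psi(x_i)\bigr) \\
&\leq K\exp\Bigl(-m_i s + \sup_{y\in B_{m_i}(x_i,\epsilon)} S_{m_i}\psi(y)\Bigr).
\end{align*}
Summing over the cover and invoking $\nu(Z)>0$ gives
\begin{align*}
0 < \nu(Z) \leq \sum_i \nu(B_{m_i}(x_i,\epsilon)) \leq K \sum_i \exp\Bigl(-m_i s + \sup_{y \in B_{m_i}(x_i,\epsilon)} S_{m_i}\psi(y)\Bigr).
\end{align*}

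Taking the infimum over $\mathcal{C} \in \Gamma_N(Z,\epsilon)$ yields $M(Z,s,\psi,N,\epsilon) \geq \nu(Z)/K > 0$, and since this bound is independent of $N$, letting $N\to\infty$ gives $M(Z,s,\psi,\epsilon) \geq \nu(Z)/K > 0$, whence $P(Z,\psi,\epsilon)\geq s$. The argument presents no substantial obstacle; the only points to verify carefully are that each Bowen ball is open, so that Portmanteau supplies the correct direction of the inequality for the limit of $\mu_k$, and that dropping balls disjoint from $Z$ preserves the covering property of $Z$ — both immediate.
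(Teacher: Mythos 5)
Your proof is correct: the paper does not prove Lemma \ref{lem3.5} but quotes it from Thompson, and your argument is precisely the standard one from those references --- discard balls missing $Z$, apply the Portmanteau inequality for open Bowen balls to a weak$^*$ limit $\nu$ with $\nu(Z)>0$, sum over the cover, and use monotonicity of $M(Z,s,\psi,n,\epsilon)$ in $n$ to conclude $M(Z,s,\psi,\epsilon)\geq \nu(Z)/K>0$, hence $P(Z,\psi,\epsilon)\geq s$. No gaps; the points you flag (openness of Bowen balls, preservation of the cover) are exactly the ones that need checking and you handle them correctly.
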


%%%%%%%%%%%%%%%%%%%%%%%%%%%%%%%%%%%%%%%%%%%%%%%%%%%%%%%%%%%%%%%%%%%%%%%%%
%%%%%%%%%%%%%%%%%%%%%%%%%%%%%%%%%%%%%%%%%%%%%%%%%%%%%%%%%%%%%%%%%%%%%%%%%
%%%%%%%%%%%%%%%%%%%%%%%%%%%%%%%%%%%%%%%%%%%%%%%%%%%%%%%%%%%%%%%%%%%%%%%%%
%%%%%%%%%%%%%%%%%%%%%%%%%%%%%%%%%%%%%%%%%%%%%%%%%%%%%%%%%%%%%%%%%%%%%%%%%
\subsubsection{Construction of the Fractal $F$}
Fix $0<\delta<1,\gamma>0$. By (\ref{equ3.1}),
we can choose $\epsilon'$ sufficiently small so
\begin{align}\label{equ3.2}
\text{Var}(\psi,\epsilon'):=\sup\{|\psi(x)-\psi(y)|:d(x,y)\leq\epsilon'\}<\gamma,
\end{align}
and
\begin{align*}
P_\nu^{Kat}(f,\psi,\epsilon')>P_\nu(f,\psi)-\gamma.
\end{align*}
For $\epsilon'>0$ and $\nu\in\mathscr{M}_{inv}(\widetilde{\Lambda}_{\mu},f)$, let us
fix the ingredients obtained by lemma \ref{lem3.3}.

We choose a increasing sequence $l_k\to\infty$ such that
$m_{k,j}(\widetilde{\Lambda}_{l_k})>1-\delta$ for all $1\leq j\leq s_k.$
Let $\eta=\frac{\epsilon'}{4\epsilon_0},$
it follows from weak shadowing lemma that there is a sequence of numbers $\{\delta_k\}.$
Let $\xi_k$ be a finite partition of $M$ with
diam$(\xi_k)<\min\{\frac{b_k(1-\exp(-\epsilon))}{4\sqrt{2}\exp((k+1)\epsilon)},\epsilon_{l_k},\frac{\delta_{l_k}}{3}\}$
and $\xi_k\geq\{\widetilde{\Lambda}_{l_k}, M\setminus \widetilde{\Lambda}_{l_k} \}.$
For $n\in\mathbb{N},$ we consider the set
\begin{align*}
\Lambda^n(m_{k,j})=\{&x\in\widetilde{\Lambda}_{l_k}:f^q(x)\in\xi_k(x){\rm ~for~some~} q\in[n,(1+\gamma)n],\\
&D(\mathscr{E}_n(x),m_{k,j})<\frac{1}{k}
{\rm~and~}\left|\frac{1}{m}S_m\psi(x)-\int \psi dm_{k,j}\right|<\epsilon'{\rm~for~all~} m\geq n\},
\end{align*}
where $\xi_k(x)$ is the element in $\xi_k$ containing $x.$
By Birkhoff ergodic theorem and lemma \ref{lem3.4} we have
$m_{k,j}(\Lambda^n(m_{k,j}))\to m_{k,j}(\widetilde{\Lambda}_{l_k})$ as $n\to\infty$.
So, we can take $n_k\to\infty$ such that
\begin{align*}
m_{k,j}(\Lambda^n(m_{k,j}))>1-\delta
\end{align*}
for all $n\geq n_k$ and $1\leq j\leq s_k.$

For $k\in \mathbb{N}$, let
\begin{align*}
Q(\Lambda^n(m_{k,j}),\epsilon')&
=\inf\bigg\{\sum_{x\in S}\exp\big(\sum_{i=0}^{n-1}\psi(f^ix)\big):S \text{ is }
(n,\epsilon') \text{ spanning set for } \Lambda^n(m_{k,j}) \bigg\},\\
P(\Lambda^n(m_{k,j}),\epsilon')&
=\sup\bigg\{\sum_{x\in S}\exp\big(\sum_{i=0}^{n-1}\psi(f^ix)\big):S \text{ is }
(n,\epsilon') \text{ separated set for } \Lambda^n(m_{k,j})\bigg\}.
\end{align*}
Then for all $n\geq n_k$ and $1\leq j\leq s_k$, we have
\begin{align*}
P(\Lambda^n(m_{k,j}),\epsilon')\geq Q(\Lambda^n(m_{k,j}),\epsilon')\geq N^{m_{k,j}}(\psi,\delta,\epsilon,n).
\end{align*}
We obtain
\begin{align*}
\liminf_{n\to\infty}\frac{1}{n}\log P(\Lambda^n(m_{k,j}),\epsilon')
\geq P_{m_{k,j}}^{Kat}(f,\psi,\epsilon').
\end{align*}
Thus for each $k\in \mathbb{N}$, we can choose $t_k$ large enough such that $\exp(\gamma t_k)>\sharp \xi_k$
and
\begin{align*}
\frac{1}{t_k}\log P(\Lambda^{t_k}(m_{k,j}),\epsilon')>P_{m_{k,j}}^{Kat}(f,\psi,\epsilon')-\gamma,1\leq j\leq s_k.
\end{align*}
Let $S(k,j)$
be a $(t_k,\epsilon')$-separated set for $\Lambda^{t_k}(m_{k,j})$ such that
\begin{align*}
\sum_{x\in S(k,j)}\exp\bigg\{\sum_{i=0}^{t_k-1}\psi(f^ix)\bigg\}
\geq\exp\left(t_k(P_{m_{k,j}}^{Kat}(f,\psi,\epsilon')-2\gamma)\right).
\end{align*}
For each $q\in[t_k,(1+\gamma)t_k],$ let
\begin{align*}
V_q=\{x\in S(k,j):q \text{ is the minimum integer such that } f^q(x)\in\xi_k(x)\}
\end{align*}
and let $n(k,j)$ be the value of $q$ which maximizes
$\sum\limits_{x\in V_q}\exp\left\{\sum\limits_{i=0}^{t_k-1}\psi(f^ix)\right\}$. Obviously,
$n(k,j)\geq t_k$ and $t_k\geq\frac{n(k,j)}{1+\gamma}\geq n(k,j)(1-\gamma).$
Since $\exp({\gamma t_k})\geq\gamma t_k+1,$ we have that
\begin{align*}
     \sum_{x\in V_{n(k,j)}}\exp\bigg\{\sum_{i=0}^{n(k,j)-1}\psi(f^ix)\bigg\}
\geq&\sum_{x\in V_{n(k,j)}}\exp\bigg\{\sum_{i=0}^{t_k-1}\psi(f^ix)\bigg\}\cdot\exp\bigg\{(t_k-n(k,j))\|\psi\|\bigg\}\\
\geq&\frac{1}{\gamma t_k+1}\sum_{x\in S(k,j)}\exp\bigg\{\sum_{i=0}^{t_k-1}\psi(f^ix)\bigg\}\cdot\exp(-n(k,j)\gamma\|\psi\|)\\
\geq&\exp\left(t_k(P_{m_{k,j}}^{Kat}(f,\psi,\epsilon')-3\gamma)-n(k,j)\gamma\|\psi\|\right).
\end{align*}
Consider the element $A_{k,j}\in\xi_k$ such that
$\sum\limits_{x\in V_{n(k,j)}\cap A_{k,j}}\exp\left\{\sum\limits_{i=0}^{n(k,j)-1}\psi(f^ix)\right\}$ is maximal.
Let $W_{n(k,j)}=V_{n(k,j)}\cap A_{k,j}$. It follows that
\begin{align*}
\sum\limits_{x\in W_{n(k,j)}}\exp\bigg\{\sum_{i=0}^{n(k,j)-1}\psi(f^ix)\bigg\}
\geq \frac{1}{\#\xi_k}\exp\left(t_k(P_{m_{k,j}}^{Kat}(f,\psi,\epsilon')-3\gamma)-n(k,j)\gamma\|\psi\|\right).
\end{align*}
Since $\exp(\gamma t_k)>\sharp \xi_k$ and $n(k,j)(1-\gamma)\leq t_k\leq n(k,j)$, we have
\begin{align*}
    &\sum\limits_{x\in W_{n(k,j)}}\exp\bigg\{\sum_{i=0}^{n(k,j)-1}\psi(f^ix)\bigg\}\\
\geq&\exp\left\{n(k,j)(P_{m_{k,j}}^{Kat}(f,\psi,\epsilon')-3\gamma)
      -n(k,j)\gamma|P_{m_{k,j}}^{Kat}(f,\psi,\epsilon')-3\gamma|-n(k,j)\gamma\|\psi\|\right\}\\
\geq&\exp\left\{n(k,j)\left(P_{m_{k,j}}^{Kat}(f,\psi,\epsilon')-3\gamma
      -\gamma H-3\gamma^2-\gamma\|\psi\|\right)\right\}.
\end{align*}
Let
\begin{align*}
R_{k,j}=\sum\limits_{x\in W_{n(k,j)}}\exp\bigg\{\sum_{i=0}^{n(k,j)-1}\psi(f^ix)\bigg\}.
g(\gamma)=3\gamma+\gamma H+3\gamma^2+\gamma\|\psi\|,
\end{align*}
Then $\lim_{\gamma\to0}g(\gamma)=0$ and
\begin{align*}
R_{k,j}\geq \exp\left\{n(k,j)\left(P_{m_{k,j}}^{Kat}(f,\psi,\epsilon')-g(\gamma)\right)\right\}.
\end{align*}

Notice that $A_{n(k,j)}(m_{k,j})$ is contained in an open subset $U(k,j)$ of some Lyapunov neighborhood
with diam$(U(k,j))\leq3\text{diam}(\xi_k).$ By the ergodicity of $\mu,$ for any two
measures $m_{k_1,j_1},m_{k_2,j_2}$ and any natural number $N$, there exists $s=s(k_1,j_1,k_2,j_2)>N$
and $y=y(k_1,j_1,k_2,j_2)\in U(k_1,j_1)\cap\widetilde{\Lambda}_{l_{k_1}}$  such that
$f^s(y)\in U(k_2,j_2)\cap\widetilde{\Lambda}_{l_{k_2}}.$
Letting $C_{k,j}=\frac{a_{k,j}}{n(k,j)},$ we can choose an integer $N_k$
large enough so that $N_kC_{k,j}$ are integers and
\begin{align}\label{equ1}
N_k\geq k\sum\limits_{\substack{1\leq r_1,r_2\leq k+1 \\ 1\leq j_i\leq s_{r_i},i=1,2}}s(r_1,j_1,r_2,j_2).
\end{align}
Let $X_k=\sum\limits_{j=1}^{s_k-1}s(k,j, k,j+1)+s(k,s_k, k,1)$  and
\begin{align}\label{equ2}
Y_k=\sum\limits_{j=1}^{s_k}N_kn(k,j)C_{k,j}+X_k=N_k+X_k,
\end{align}
then we have
\begin{align}
\frac{N_k}{Y_k}\geq \frac{1}{1+\frac{1}{k}}\geq 1-\frac{1}{k}.\label{equ3}
\end{align}
Choose a strictly increasing sequence $\{T_k\}$ with $T_k\in\mathbb{N},$
\begin{align}\label{equ4}
Y_{k+1}\leq\frac{1}{k+1}\sum\limits_{r=1}^kY_rT_r,
\sum\limits_{r=1}^k(Y_rT_r+s(r,1,r+1,1))\leq \frac{1}{k+1}Y_{k+1}T_{k+1}.
\end{align}
For $x\in X,$ we define segments of orbits
\begin{align*}
L_{k,j}(x)&:=(x,f(x),\cdots,f^{n(k,j)-1}(x)), 1\leq j\leq s_k,\\
\widehat{L}_{k_1,j_1,k_2,j_2}(x)&:=(x,f(x),\cdots,f^{s(k_1,j_1,k_2,j_2)-1}(x)),1\leq j_i\leq s_{k_i},i=1,2.
\end{align*}
Consider the pseudo-orbit with finite length
\begin{align*}
O_k=O( &x(1,1,1,1), \cdots,x(1,1,1,N_1C_{1,1}),\cdots,
x(1,s_1,1,1),\cdots,x(1,s_1,1,N_1C_{1,s_1});\\&\cdots;\\
&x(1,1,T_1,1),\cdots,x(1,1,T_1,N_1C_{1,1}),\cdots,x(1,s_1,T_1,1),\cdots, x(1,s_1,T_1,N_1C_{1,s_1});\\
&\vdots\\&
x(k,1,1,1), \cdots,x(k,1,1,N_kC_{k,1}),\cdots,
x(k,s_k,1,1),\cdots,x(k,s_k,1,N_kC_{k,s_k});\\&\cdots;\\&
x(k,1,T_k,1), \cdots,x(k,1,T_k,N_kC_{k,1}),\cdots,
x(k,s_k,T_k,1),\cdots,x(k,s_k,T_k,N_kC_{k,s_k});
)
\end{align*}
with the precise form as follows:
\begin{align*}
\{ &L_{1,1}(x(1,1,1,1)), \cdots,L_{1,1}(x(1,1,1,N_1C_{1,1})),\widehat{L}_{1,1,1,2}(y(1,1,1,2));\\&
L_{1,2}(x(1,2,1,1)), \cdots,L_{1,2}(x(1,2,1,N_1C_{1,2})),\widehat{L}_{1,2,1,3}(y(1,2,1,3));
\cdots,\\&
L_{1,s_1}(x(1,s_1,1,1)), \cdots,L_{1,s_1}(x(1,s_1,1,N_1C_{1,s_1})),\widehat{L}_{1,s_1,1,1}(y(1,s_1,1,1));\\&
\cdots,\\&
L_{1,1}(x(1,1,T_1,1)), \cdots,L_{1,1}(x(1,1,T_1,N_1C_{1,1})),\widehat{L}_{1,1,1,2}(y(1,1,1,2));\\&
L_{1,2}(x(1,2,T_1,1)), \cdots,L_{1,2}(x(1,2,T_1,N_1C_{1,2})),\widehat{L}_{1,2,1,3}(y(1,2,1,3));
\cdots,\\&
L_{1,s_1}(x(1,s_1,T_1,1)), \cdots,L_{1,s_1}(x(1,s_1,T_1,N_1C_{1,s_1})),\widehat{L}_{1,s_1,1,1}(y(1,s_1,1,1));\\&
\widehat{L}(y(1,1,2,1));\\&\vdots,\\&
L_{k,1}(x(k,1,1,1)), \cdots,L_{k,1}(x(k,1,1,N_kC_{k,1})),\widehat{L}_{k,1,k,2}(y(k,1,k,2));\\&
L_{k,2}(x(k,2,1,1)), \cdots,L_{k,2}(x(k,2,1,N_kC_{k,2})),\widehat{L}_{k,2,k,3}(y(k,2,k,3));\cdots\\&
L_{k,s_k}(x(k,s_k,1,1)), \cdots,L_{k,s_k}(x(k,s_k,1,N_kC_{k,s_k})),\widehat{L}_{k,s_k,k,1}(y(k,s_k,k,1));\\
&\cdots\\&
L_{k,1}(x(k,1,T_k,1)), \cdots,L_{k,1}(x(k,1,T_k,N_kC_{k,1})),\widehat{L}_{k,1,k,2}(y(k,1,k,2));\\&
L_{k,2}(x(k,2,T_k,1)), \cdots,L_{k,2}(x(k,2,T_k,N_kC_{k,2})),\widehat{L}_{k,2,k,3}(y(k,2,k,3));\cdots\\&
L_{k,s_k}(x(k,s_k,T_k,1)), \cdots,L_{k,s_k}(x(k,s_k,T_k,N_kC_{k,s_k})),\widehat{L}_{k,s_k,k,1}(y(k,s_k,k,1));\\&
\widehat{L}(y(k,1,k+1,1));
\},
\end{align*}
where $x(q,j,i,t)\in W_{n(q,j)}.$

For $1\leq q\leq k,1\leq i\leq T_q,1\leq j\leq s_q, 1\leq t\leq N_qC_{q,j},$ let $M_1=0,$
\begin{align*}
M_q&=M_{q,1}=\sum\limits_{r=1}^{q-1}(T_rY_r+s(r,1,r+1,1)),\\
M_{q,i}&=M_{q,i,1}=M_q+(i-1)Y_q,\\
M_{q,i,j}&=M_{q,i,j,1}=M_{q,i}+\sum\limits_{p=1}^{j-1}(N_qn(q,p)C_{q,p}+s(k,p,k,p+1)),
\\
M_{q,i,j,t}&=M_{q,i,j}+(t-1)n(q,j).
\end{align*}
By weak shadowing lemma, there exist at least one shadowing point $z$ of $O_k$ such that
\begin{align*}
d(f^{M_{q,i,j,t}+p}(z),f^p(x(q,j,i,t)))\leq \eta\epsilon_0\exp(-\epsilon l_q)\leq\frac{\epsilon'}{4\epsilon_0}\epsilon_0\exp(-\epsilon l_q)\leq \frac{\epsilon'}{4},
\end{align*}
for $1\leq q\leq k, 1\leq i\leq T_q, 1\leq j\leq s_q, 1\leq t\leq N_qC_{q,j}, 1\leq p\leq n(q,j)-1.$
Let $B(x(1,1,1,1),\cdots,x(k,s_k,T_k,N_kC_{k,s_k}))$
be the set of all shadowing points for the above pseudo-orbit. Precisely,
\begin{align*}
B(&x(1,1,1,1),\cdots,x(k,s_k,T_k,N_kC_{k,s_k}))=\\
B(&x(1,1,1,1),\cdots,x(1,1,1,N_1C_{1,1},),\cdots, x(1,s_1,1,1), \cdots, x(1,s_1,1,N_1C_{1,s_1});\\&\cdots;\\&
x(1,1,T_1,1),\cdots,x(1,1,T_1,N_1C_{1,1},),\cdots, x(1,s_1,T_1,1), \cdots, x(1,s_1,T_1,N_1C_{1,s_1});\\&\cdots;\\&
x(k,1,T_1,1),\cdots,x(k,1,1,N_kC_{k,1},),\cdots, x(k,s_k,1,1), \cdots, x(k,s_k,1,N_kC_{k,s_k});\\&\cdots;\\&
x(k,1,T_k,1),\cdots,x(k,1,T_k,N_kC_{k,1},),\cdots, x(k,s_k,T_k,1), \cdots, x(k,s_k,T_k,N_kC_{k,s_k})).
\end{align*}
Then the set $B(x(1,1,1,1),\cdots,x(k,s_k,T_k,N_kC_{k,s_k}))$ can be considered
as a map with variables $x(q,j,i,t)$.
We define $F_k$ by
\begin{align*}
F_k=\bigcup\{B(&x(1,1,1,1),\cdots,x(k,s_k,T_k,N_kC_{k,s_k})):\\
&x(1,1,1,1)\in W_{n(1,1)},\cdots,x(k,s_k,T_k,N_kC_{k,s_k})\in W_{n(k,s_k)}\}.
\end{align*}
Obviously, $F_k$ is non-empty compact and $F_{k+1}\subseteq F_{k}$. Define $F=\bigcap_{k=1}^{\infty}F_k$.

\begin{lem}
$F\subseteq G_{\nu}$.
\begin{proof}
For any $z\in F$, we can choose sufficiently $k$ such that $z\in F_k$.
Assume that $z\in B(x(1,1,1,1),\cdots,x(k,s_k,T_k,N_kC_{k,s_k}))$.
The remaining proof is similar to the proof of lemma 4.4 in \cite{LiaLiaSUnTia}.
\end{proof}
\end{lem}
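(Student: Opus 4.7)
The plan is to show that for every $z\in F$ the empirical measures $\mathscr{E}_n(z)$ converge weak-$*$ to $\nu$, hence $z\in G_\nu$. Fix $z\in F$; then $z\in F_k$ for every $k$, so $z$ is an $\eta$-shadowing point of the concatenated pseudo-orbit from the construction, and every orbit piece $L_{q,j}(x(q,j,i,t))$ starts at some $x(q,j,i,t)\in W_{n(q,j)}\subseteq\Lambda^{n(q,j)}(m_{q,j})$. In particular, by the definition of $\Lambda^{n(q,j)}(m_{q,j})$, we have $D(\mathscr{E}_{n(q,j)}(x(q,j,i,t)),m_{q,j})<1/q$. Writing $\widetilde{\mathscr{E}}_n$ for the empirical measure along the concatenated pseudo-orbit itself, the scheme is: (i) use shadowing to pass from $\mathscr{E}_n(z)$ to $\widetilde{\mathscr{E}}_n$; (ii) analyse $\widetilde{\mathscr{E}}_n$ block by block using the structural estimates (\ref{equ1})--(\ref{equ4}).

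For step (i) I would combine the shadowing estimate $d(f^{M_{q,i,j,t}+p}(z),f^p(x(q,j,i,t)))\leq\epsilon'/4$ with the diameter bound $\mathrm{diam}(\xi_k)<b_k(1-e^{-\epsilon})/(4\sqrt{2}e^{(k+1)\epsilon})$ and the definition of the metric $D$. For each test function $\varphi_i$ with $i\leq k$, uniform continuity gives $|\varphi_i(f^n(z))-\varphi_i(y_n)|<\|\varphi_i\|/k$, where $y_n$ is the pseudo-orbit point at time $n$, so $D(\mathscr{E}_n(z),\widetilde{\mathscr{E}}_n)\leq 1/k+o(1)$ as $n\to\infty$. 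Since $k$ is arbitrary, it suffices to show $\widetilde{\mathscr{E}}_n\to\nu$.

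For step (ii), given $n$ with $M_q\leq n<M_{q+1}$ and $q$ large, I would split the first $n$ pseudo-orbit points into: (a) the ``pre-history'' from levels $1,\dots,q-1$, of total length at most $\sum_{r<q}(T_rY_r+s(r,1,r+1,1))\leq T_qY_q/q$ by (\ref{equ4}); (b) the transition segments $\widehat{L}$ within level $q$, of total length at most $T_qX_q\leq T_qY_q/q$ by (\ref{equ3}); and (c) the main pieces $L_{q,j}$ in level $q$. The contributions (a) and (b) together carry mass $O(1/q)$ in the empirical measure. The main pieces in (c), weighted by the proportions $N_qn(q,j)C_{q,j}/N_q=a_{q,j}$ and each $1/q$-close to $m_{q,j}$, yield an empirical measure within $O(1/q)$ of $\mu_q=\sum_j a_{q,j}m_{q,j}$. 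Combined with $D(\nu,\mu_q)\leq 1/q$ from Lemma \ref{lem3.3}, this gives $D(\widetilde{\mathscr{E}}_n,\nu)\to 0$. The ``boundary'' case where $n$ is only slightly larger than $M_q$ is handled symmetrically: the empirical measure is then a convex combination dominated by $\mu_{q-1}$ (again by (\ref{equ4})), which is $1/(q-1)$-close to $\nu$.

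The main obstacle I anticipate is the uniform bookkeeping at ``awkward'' intermediate times $n$ that fall inside an incomplete $L_{q,j}$ piece or straddle a transition: one must check that the trailing incomplete piece contributes at most $n(q,j)/n=O(1/(N_qT_q))$ to the total mass, that the $\gamma$-slack in $t_q\leq n(q,j)\leq(1+\gamma)t_q$ is absorbed into the $O(\gamma)$ error, and that the inter-level transitions of length $s(q,1,q+1,1)$ are covered by the second inequality in (\ref{equ4}). Each individual estimate is elementary, but all the smallness parameters $\epsilon'\ll\gamma\ll 1/k$ must be kept in the correct hierarchy so that the total error tends to zero. As the authors remark, once this splitting is set up the remainder follows the bookkeeping in Lemma 4.4 of \cite{LiaLiaSUnTia}.
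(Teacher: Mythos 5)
Your overall architecture --- pass from $\mathscr{E}_n(z)$ to the empirical measure of the shadowed pseudo-orbit, then analyse the latter block by block using (\ref{equ1})--(\ref{equ4}) and Lemma \ref{lem3.3}, with the boundary cases dominated by $\mu_{q-1}$ --- is exactly the route the paper intends (its proof simply defers to Lemma 4.4 of \cite{LiaLiaSUnTia}). But step (i) as written contains a genuine gap: from the uniform bound $d(f^{M_{q,i,j,t}+p}(z),f^p(x(q,j,i,t)))\le\epsilon'/4$ you cannot conclude $|\varphi_i(f^n(z))-\varphi_i(y_n)|<\|\varphi_i\|/k$ by uniform continuity, because $\epsilon'$ is fixed once and for all at the start of the construction and bears no relation to the moduli $b_k$; for large $k$ (indeed for any rapidly oscillating $\varphi_i$) one has $b_k\ll\epsilon'/4$. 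With only the $\epsilon'/4$ bound you would obtain that every limit point of $\mathscr{E}_n(z)$ lies within a fixed, $\epsilon'$-dependent distance of $\nu$, which is weaker than $z\in G_\nu$ and useless for the lower bound on $P(G_\nu,\psi)$. What makes the comparison work is the level-dependence of the shadowing accuracy: the shadowing point satisfies $d(f^n(z),x_n)\le\eta\epsilon_{s_n}=\eta\epsilon_0e^{-\epsilon s_n}$, and along the level-$q$ portion the block indices satisfy $s_n\ge l_q$, so the pointwise error is at most $\eta\epsilon_0e^{-\epsilon l_q}\to0$ (this is precisely the middle term in the paper's displayed estimate after invoking the weak shadowing lemma); equivalently, in \cite{LiaLiaSUnTia} the closeness is driven through the hyperbolic estimates in the Lyapunov neighborhoods, which is why ${\rm diam}(\xi_k)$ is taken smaller than $b_k(1-e^{-\epsilon})/(4\sqrt{2}e^{(k+1)\epsilon})$. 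Either way, for each fixed test function the error eventually drops below the relevant modulus while the low levels occupy a vanishing proportion of time; you name these ingredients but never make this quantitative link, and it is the heart of step (i).

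A second, smaller issue: your claim that each block of length $n(q,j)$ has empirical measure $1/q$-close to $m_{q,j}$ implicitly uses closeness of $\mathscr{E}_m(x)$ to $m_{q,j}$ for all $m\ge t_q$ (as in \cite{LiaLiaSUnTia}), whereas the paper's printed definition of $\Lambda^n(m_{k,j})$ imposes $D(\mathscr{E}_n(x),m_{k,j})<\frac{1}{k}$ only at $m=n$; with that literal reading, the slack $t_q\le n(q,j)\le(1+\gamma)t_q$ leaves a persistent $O(\gamma)$ error that cannot be ``absorbed'' --- it would only give $V(z)\subset B(\nu,O(\gamma))$, not $V(z)=\{\nu\}$, since $\gamma$ is fixed in the construction. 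So you should invoke (or point out the need for) the ``for all $m\ge n$'' form of the empirical-measure condition. Also, in step (ii) the incomplete object you must control is a partial $Y_q$-cycle through $j=1,\dots,s_q$ (which distorts the weights $a_{q,j}$ and has length at most $Y_q\le M_q/q$ by (\ref{equ4})), not merely a single trailing block of length $n(q,j)$; with that adjustment your bookkeeping via (\ref{equ3}) and (\ref{equ4}) is correct.
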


%%%%%%%%%%%%%%%%%%%%%%%%%%%%%%%%%%%%%%%%%%%%%%%%%%%%%%%%%%%%%%%%%%%%%%%%%
%%%%%%%%%%%%%%%%%%%%%%%%%%%%%%%%%%%%%%%%%%%%%%%%%%%%%%%%%%%%%%%%%%%%%%%%%
%%%%%%%%%%%%%%%%%%%%%%%%%%%%%%%%%%%%%%%%%%%%%%%%%%%%%%%%%%%%%%%%%%%%%%%%%
%%%%%%%%%%%%%%%%%%%%%%%%%%%%%%%%%%%%%%%%%%%%%%%%%%%%%%%%%%%%%%%%%%%%%%%%%
\subsubsection{Construction of a Special Sequence
of Measures $\alpha_k$}
Now, we construct a sequence of measures to compute the topological entropy of $F$. We first undertake an intermediate
constructions. For each
\begin{align*}
\underline{x}=(x(1,1,1,1),\cdots,x(k,s_k,T_k,N_kC_{k,s_k}))\in W_{n(1,1)}\times\cdots\times W_{n(k,s_k)},
\end{align*}
we choose one point $z=z(\underline{x})$ such that
\begin{align*}
z\in B(x(1,1,1,1),\cdots,x(k,s_k,T_k,N_kC_{k,s_k}))
\end{align*}
Let $L_k$ be the set of all points constructed in this way.
Fix the position indexed
$q,j,i,t,$ for distinct $x(q,j,i,t),x'(q,j,i,t)\in W_{n(q,j)}$,
the corresponding shadowing points $z,z'$ satisfying
\begin{align*}
 &d(f^{M_{q,i,j,t}+p}(z), f^{M_{q,i,j,t}+p}(z'))\\
 \geq &d(f^p(x(q,j,i,t)),f^p(x'(q,j,i,t)))-d(f^{M_{q,i,j,t}+p}(z),f^p(x(q,j,i,t)))\\
       &-d(f^{M_{q,i,j,t}+p}(z'),f^q(x'(q,j,i,t)))\\
 \geq &d(f^q(x(q,j,i,t)),f^p(x'(q,j,i,t)))-\frac{\epsilon'}{2}.
\end{align*}
Noticing that $x(q,j,i,t),x'(q,j,i,t)$ are $(n(q,j),\epsilon')$-separated, we obtain
$f^{M_{q,i,j,t }}(z)$, $f^{M_{q,i,j,t }}(z')$ are $(n(q,j),\epsilon'/2)$-separated.

For each $z\in L_k,$ we
associate a number $\EuScript{L}_k(z)\in (0,\infty).$ Using these
numbers as weights, we define, for each $k,$ an atomic measure
centered on $L_k.$ Precisely, if
\begin{align*}
z\in B(x(1,1,1,1),\cdots,x(k,s_k,T_k,N_kC_{k,s_k})),
\end{align*}
we define
\begin{align*}
\mathcal{L}_k(z)=\mathcal{L}(x(1,1,1,1))\cdots\mathcal{L}(x(k,s_k,T_k,N_kC_{k,s_k})),
\end{align*}
where $\mathcal{L}(x(q,j,i,t))=\exp S_{n(q,j)}\psi(x(q,j,i,t))$.

We define
\begin{align*}
\alpha_k:=\frac{\sum_{z\in L_k}\mathcal{L}_k(z)\delta_z}{\kappa_k},
\end{align*}
where
\begin{align*}
&\kappa_k=\sum_{z\in L_k}\mathcal{L}_k(z)\\
=&\sum_{x(1,1,1,1)\in W_{n(1,1)}}\cdots\sum_{x(k,s_k,T_k,N_kC_{k,s_k})\in W_{n(k,s_k)}}
  \mathcal{L}(x(1,1,1,1))\cdots\mathcal{L}(x(k,s_k,T_k,N_kC_{k,s_k}))\\
=&\left(R_{1,1}^{N_1C_{1,1}}R_{1,2}^{N_1C_{1,2}}\cdots R_{1,s_1}^{N_1C_{1,s_1}}\right)^{T_1}\cdots
\left(R_{k,1}^{N_kC_{k,1}}R_{k,2}^{N_kC_{k,2}}\cdots R_{k,s_k}^{N_kC_{k,s_k}}\right)^{T_k}.
\end{align*}

In order to prove the main results of this paper, we present some lemmas.
\begin{lem}
Suppose $\nu$ is a limit measure of the sequence of probability
measures $\alpha_k.$ Then $\nu(F)=1.$
\end{lem}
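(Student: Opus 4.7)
The plan is to apply the Portmanteau theorem using the fact that the supports of the $\alpha_k$ are progressively nested in the closed sets $F_k$, whose intersection is $F$. This is a standard finish in constructions of this type (cf.\ \cite{PfiSul,Tho2,LiaLiaSUnTia}), and the hypotheses already arranged in the construction do almost all the work.

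First I would observe that, by the very definition of $L_k$ and $F_k$, we have $L_k\subseteq F_k$ for every $k$, since each point of $L_k$ was chosen as a shadowing point of the finite pseudo-orbit recorded by some tuple $(x(1,1,1,1),\dots,x(k,s_k,T_k,N_kC_{k,s_k}))$. Combining this with the nesting $F_{k+1}\subseteq F_k$ noted right after the definition of $F_k$, I get $L_{k'}\subseteq F_{k'}\subseteq F_k$ for every $k'\geq k$. Since $\alpha_{k'}$ is a probability measure supported on $L_{k'}$, this yields $\alpha_{k'}(F_k)=1$ for all $k'\geq k$.

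Next I would check that each $F_k$ is closed. The set $B(x(1,1,1,1),\dots,x(k,s_k,T_k,N_kC_{k,s_k}))$ is defined as the collection of $z$ satisfying finitely many non-strict inequalities of the form $d(f^{M_{q,i,j,t}+p}(z),f^p(x(q,j,i,t)))\leq \eta\epsilon_{l_q}$; since each $f^m$ is continuous and these are closed conditions, the set is closed. Because each $W_{n(q,j)}$ is finite, $F_k$ is a finite union of such closed sets and is therefore closed. The Portmanteau theorem for weak$^*$ convergence then gives, for every fixed $k$,
\begin{align*}
\nu(F_k)\;\geq\;\limsup_{k'\to\infty}\alpha_{k'}(F_k)\;=\;1,
\end{align*}
along the subsequence of $\alpha_{k'}$ converging to $\nu$.

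Finally, since $F_1\supseteq F_2\supseteq\cdots$ is a decreasing sequence of closed sets with $F=\bigcap_{k\geq 1}F_k$, continuity of the measure from above gives $\nu(F)=\lim_{k\to\infty}\nu(F_k)=1$. I do not expect any genuine obstacle here; the only point requiring care is verifying that $F_k$ is closed (so Portmanteau applies), and this reduces to the fact that the shadowing defining conditions are finite conjunctions of closed inequalities and that the index sets $W_{n(q,j)}$ over which we take the union are finite.
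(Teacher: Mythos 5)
Your proposal is correct and follows essentially the same route as the paper: support of $\alpha_{k'}$ in $F_{k'}\subseteq F_k$ gives $\alpha_{k'}(F_k)=1$, the Portmanteau inequality for the closed (indeed compact) sets $F_k$ gives $\nu(F_k)=1$, and continuity from above along the nested sequence yields $\nu(F)=1$. The only difference is that you spell out the closedness of $F_k$, which the paper simply asserts as obvious.
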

\begin{proof}
Suppose $\nu=\lim_{k\to\infty}\alpha_{l_k}$ for $l_k\to\infty$. For any fixed $l$ and all
$p\geq0$, $\alpha_{l+p}(F_l)=1$ since $F_{l+p}\subset F_l$. Thus, $\nu(F_l)\geq\limsup_{k\to\infty}\alpha_{l_k}(F_l)=1$.
It follows that $\nu(F)=\lim_{l\to\infty}\nu(F_l)=1$.
\end{proof}

Let $\EuScript{B}=B_n(x,\frac{\epsilon'}{8})$ be an arbitrary ball which intersets $F$. Let $k$ be an unique number satisfies
$M_{k+1}\leq n<M_{k+2}$. Let $i\in\{1,\cdots,T_{k+1}\}$ be the unique number so
\begin{align*}
M_{k+1,i}\leq n<M_{k+1,i+1}.
\end{align*}
Here we appoint $M_{k+1,T_{k+1}+1}=M_{k+2,1}$. We assume that $i\geq2$, the simpler case $i=1$ is similar.

\begin{lem}\label{lem3.8}
For $p\geq 1$,
\begin{align*}
&\alpha_{k+p}(B_{n}(x,\frac{\epsilon'}{8}))\\
\leq&
\frac{1}
{\kappa_k\left(R_{k+1,1}^{N_{k+1}C_{k+1,1}}R_{k+1,2}^{N_{k+1}C_{k+1,2}}
\cdots R_{k+1,s_{k+1}}^{N_{k+1}C_{k+1,s_{k+1}}}\right)^{i-1}}
\exp\bigg\{S_n\psi(x)+2n\text{Var}(\psi,\epsilon')\\
&+\|\psi\|\bigg(\sum_{q=1}^{k}(T_qX_q+s(q,1,q+1,1))+(i-1)X_{k+1}+Y_{k+1}+s(k+1,1,k+2,1)\bigg)\bigg\}.
\end{align*}
\end{lem}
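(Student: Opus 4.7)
The plan is to exploit the separation property established just before the lemma: distinct choices of $x(q,j,i',t)$ produce shadowing points whose iterates are $(n(q,j), \epsilon'/2)$-separated at position $M_{q,i',j,t}$. Consequently, any two $z, z' \in L_{k+p}$ lying in the same Bowen ball $B_n(x, \epsilon'/8)$ must agree on the choice $x(q,j,i',t)$ whenever the window $[M_{q,i',j,t}, M_{q,i',j,t} + n(q,j))$ is fully contained in $[0, n)$, since the ball forces their iterates to stay within $\epsilon'/4$ throughout that window.

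Based on this, I would split the index set into a \emph{fixed} part $\mathcal{P}_F$ containing all positions with $q \leq k$ together with positions $(k+1, j, i', t)$ for $i' \leq i-1$, and a \emph{free} part $\mathcal{P}_{\text{free}}$ containing the rest. Every window in $\mathcal{P}_F$ ends by time $M_{k+1,i} \leq n$, so the fixed-part choices are determined by $x$, while the free-part choices are unconstrained. Using the factorisation $\mathcal{L}_{k+p}(z) = \prod \mathcal{L}(x(q,j,i',t))$, I obtain
\[
\sum_{z \in L_{k+p} \cap B_n(x, \epsilon'/8)} \mathcal{L}_{k+p}(z) \leq \prod_{\mathcal{P}_F} \mathcal{L}(\bar{x}(q,j,i',t)) \cdot \prod_{\mathcal{P}_{\text{free}}} R_{q,j},
\]
where the second product has exponents $(T_{k+1}-i+1) N_{k+1} C_{k+1,j}$ for $j \in \{1,\dots,s_{k+1}\}$ and $T_q N_q C_{q,j}$ for $k+2 \leq q \leq k+p$. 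Dividing by $\kappa_{k+p}$ cancels all $R_{q,j}$-factors except those corresponding to $q \leq k$ and to the first $i-1$ iterations of block $k+1$, producing exactly the denominator $\kappa_k \bigl(\prod_j R_{k+1,j}^{N_{k+1} C_{k+1,j}}\bigr)^{i-1}$ in the statement.

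To bound the fixed product, I combine the shadowing estimate $d(f^p(\bar{x}(q,j,i',t)), f^{M_{q,i',j,t}+p}(z)) \leq \epsilon'/4$ with the ball condition $d(f^{M_{q,i',j,t}+p}(z), f^{M_{q,i',j,t}+p}(x)) \leq \epsilon'/8$, yielding $|\psi(f^p \bar{x}) - \psi(f^{M_{q,i',j,t}+p} x)| \leq \text{Var}(\psi, \epsilon')$ by (\ref{equ3.2}). Summing over all fixed positions and all window indices replaces $\sum S_{n(q,j)} \psi(\bar{x})$ with $\sum_{m \in I_F} \psi(f^m x) + n \cdot \text{Var}(\psi, \epsilon')$, where $I_F \subseteq [0, n)$ is the union of fixed windows. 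Its complement in $[0, n)$ consists of the connector segments $\widehat{L}$ inside blocks $1, \dots, k$, the connectors inside the first $i-1$ iterations of block $k+1$, and the tail beyond $M_{k+1,i}$; an elementary count bounds its length by $\sum_{q=1}^k (T_q X_q + s(q,1,q+1,1)) + (i-1) X_{k+1} + Y_{k+1} + s(k+1,1,k+2,1)$, each missing slot contributing at most $\|\psi\|$ to the error. The main technical obstacle is purely combinatorial bookkeeping: tracking the nested indices carefully enough to identify $I_F$ with the correct complement and to absorb the several $\text{Var}(\psi, \epsilon')$-type contributions into the $2n \,\text{Var}(\psi, \epsilon')$ coefficient appearing in the statement.
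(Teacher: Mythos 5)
Your proposal is correct and follows essentially the same route as the paper: use the $(n(q,j),\epsilon'/2)$-separation of shadowing points to force all choices whose windows end before $M_{k+1,i}\leq n$ to coincide, sum the free choices to produce the $R_{q,j}$-powers that cancel against $\kappa_{k+p}$ and leave the denominator $\kappa_k\bigl(\prod_j R_{k+1,j}^{N_{k+1}C_{k+1,j}}\bigr)^{i-1}$, and then estimate the fixed product via the $\epsilon'/4$ shadowing bound plus the $\epsilon'/8$ ball condition, with $\|\psi\|$ absorbing the connector segments and tail. The paper writes out only $p=1$ and declares $p>1$ similar, whereas you treat general $p$ uniformly, but the argument is the same.
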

\begin{proof}
Case $p=1$. Suppose $\alpha_{k+1}(B_{n}(x,\frac{\epsilon'}{8}))>0$,
then $L_{k+1}\cap B_{n}(x,\frac{\epsilon'}{8})
\neq \emptyset$.
Let $z=z(\underline{x},\underline{x}_{k+1}),z^{\prime}=z(\underline{y},\underline{y}_{k+1})
\in L_{k+1}\cap B_{n}(x,\frac{\epsilon'}{8})$, where
\begin{align*}
\underline{x}&=(x(1,1,1,1),\cdots,x(k,s_k,T_k,N_kC_{k,s_k})),\\
\underline{y}&=(y(1,1,1,1),\cdots,y(k,s_k,T_k,N_kC_{k,s_k})),
\end{align*}
and
\begin{align*}
\underline{x}_{k+1}=(&x(k+1,1,1,1),\cdots,x(k+1,s_{k+1},i-1,N_{k+1}C_{k+1,s_{k+1}}),\\
                    &\cdots,x(k+1,s_{k+1},T_k,N_{k+1}C_{k+1,s_{k+1}}))\\
\underline{y}_{k+1}=(&y(k+1,1,1,1),\cdots,y(k+1,s_{k+1},i-1,N_{k+1}C_{k+1,s_{k+1}})\\
                    &\cdots,y(k+1,s_{k+1},T_k,N_{k+1}C_{k+1,s_{k+1}})).
\end{align*}
Since $d_{n}(z,z')<\frac{\epsilon'}{4}$, we have $\underline{x}=\underline{y}$ and
$x(k+1,1,1,1)=y(k+1,1,1,1),\cdots,
x(k+1,s_{k+1},i-1,N_{k+1}C_{k+1,s_{k+1}})=y(k+1,s_{k+1},i-1,N_{k+1}C_{k+1,s_{k+1}})$.
Thus we have
\begin{align*}
\alpha_{k+1}(B_{n}(x,\frac{\epsilon'}{8}))\leq&
\frac{1}{\kappa_{k+1}}
\mathcal{L}(x(1,1,1,1))\cdots\mathcal{L}(x(k+1,s_{k+1},i-1,N_{k+1}C_{k+1,s_{k+1}}))\cdot\\
&\left(R_{k+1,1}^{N_{k+1}C_{k+1,1}} R_{k+1,2}^{N_{k+1}C_{k+1,2}}\cdots R_{k+1,s_{k+1}}^{N_{k+1}C_{k+1,s_{k+1}}}\right)^{T_{k+1}-(i-1)}\\
=&\frac{\mathcal{L}(x(1,1,1,1))\cdots\mathcal{L}(x(k+1,s_{k+1},i-1,N_{k+1}C_{k+1,s_{k+1}}))}
{\kappa_k\left(R_{k+1,1}^{N_{k+1}C_{k+1,1}} R_{k+1,2}^{N_{k+1}C_{k+1,2}}
\cdots R_{k+1,s_{k+1}}^{N_{k+1}C_{k+1,s_{k+1}}}\right)^{i-1}}.
\end{align*}
Since
\begin{align*}
d(f^{M_{q,i,j,t}+p}(z),f^p(x(q,j,i,t)))\leq \frac{\epsilon'}{4},
\end{align*}
$d_n(z,x)<\frac{\epsilon'}{8}$ and $i\geq2$, we have
\begin{align*}
     &\mathcal{L}(x(1,1,1,1))\cdots\mathcal{L}(x(k+1,s_{k+1},i-1,N_{k+1}C_{k+1,s_{k+1}}))\\
\leq &\exp\bigg\{S_n\psi(x)+2n\text{Var}(\psi,\epsilon')\\
&+\|\psi\|\bigg(\sum_{q=1}^{k}(T_qX_q+s(q,1,q+1,1))+(i-1)X_{k+1}+Y_{k+1}+s(k+1,1,k+2,1)\bigg)\bigg\}.
\end{align*}
Case $p>1$ is similar.
\end{proof}

\begin{lem}\label{lem3.9}
For sufficiently large $n$,
\begin{align*}
\limsup\limits_{m\to\infty}\alpha_m(B_n(x,\epsilon))
\leq\exp\left\{-n(P_{\nu}(f,\psi)-5\gamma-g(\gamma))+S_n\psi(x)\right\}.
\end{align*}
\end{lem}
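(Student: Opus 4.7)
The plan is to read off the estimate directly from Lemma \ref{lem3.8} by lower-bounding its denominator and controlling the additive error terms in the exponent. For sufficiently large $n$, let $k=k(n)$ and $i=i(n)$ be the unique indices with $M_{k+1,i}\leq n<M_{k+1,i+1}$; then $k\to\infty$ as $n\to\infty$. Since Lemma \ref{lem3.8} holds uniformly in $p\geq 1$, passing to $\limsup_{m\to\infty}$ preserves the same upper bound, and it remains to recast it in the desired form.

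For the denominator, recall from the construction that $R_{k,j}\geq\exp\{n(k,j)(P_{m_{k,j}}^{Kat}(f,\psi,\epsilon')-g(\gamma))\}$ and $N_k\,n(k,j)\,C_{k,j}=N_k\,a_{k,j}$. Raising to the $N_kC_{k,j}$-power, multiplying across $j$ (using $\sum_j a_{k,j}=1$), and applying Lemma \ref{lem3.3} together with the choice of $\epsilon'$, we get $\sum_j a_{r,j}P_{m_{r,j}}^{Kat}(f,\psi,\epsilon')\geq P_\nu^{Kat}(f,\psi,\epsilon')\geq P_\nu(f,\psi)-\gamma$ at every stage $r$. Assembling these stage-by-stage estimates,
\[
\kappa_k\Bigl(\prod_{j=1}^{s_{k+1}}R_{k+1,j}^{N_{k+1}C_{k+1,j}}\Bigr)^{i-1}\;\geq\;\exp\bigl\{\tilde n\bigl(P_\nu(f,\psi)-\gamma-g(\gamma)\bigr)\bigr\},
\]
where $\tilde n:=\sum_{r=1}^{k}T_rN_r+(i-1)N_{k+1}$.

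Next I control the error coefficient
\[
E:=\sum_{q=1}^{k}\bigl(T_qX_q+s(q,1,q+1,1)\bigr)+(i-1)X_{k+1}+Y_{k+1}+s(k+1,1,k+2,1)
\]
which multiplies $\|\psi\|$ in Lemma \ref{lem3.8}. By \eqref{equ3}, $X_r=Y_r-N_r\leq Y_r/r$; by \eqref{equ4}, $Y_{k+1}\leq M_{k+1}/(k+1)$ and $\sum_{r=1}^{k-1}T_rY_r\leq T_kY_k/k$; by \eqref{equ1}, $\sum_{r=1}^{k}s(r,1,r+1,1)\leq N_k/k$ and $s(k+1,1,k+2,1)\leq N_{k+1}/(k+1)$. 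Iterating these conditions and using $M_{k+1}\leq n$ together with $(i-1)Y_{k+1}\leq n$, each summand of $E$ is shown to be $o(n)$ as $k\to\infty$, whence $E=o(n)$ and $n-\tilde n\leq E=o(n)$. Substituting into Lemma \ref{lem3.8}, using $\mathrm{Var}(\psi,\epsilon')<\gamma$ by \eqref{equ3.2} and the uniform bound $H\geq|P_\nu(f,\psi)-\gamma-g(\gamma)|$, the logarithm of the upper bound becomes at most
\[
S_n\psi(x)-n\bigl(P_\nu(f,\psi)-\gamma-g(\gamma)\bigr)+2n\gamma+(\|\psi\|+H)\,E,
\]
and for $n$ sufficiently large the last two terms are together $\leq 4n\gamma$, yielding the claimed bound $S_n\psi(x)-n(P_\nu(f,\psi)-5\gamma-g(\gamma))$.

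The main technical obstacle is establishing $E=o(n)$: naive inequalities such as $\sum_r s(r,1,r+1,1)\leq M_{k+1}\leq n$ only give $E=O(n)$, which is insufficient to absorb into $\gamma n$. What makes the estimate go through is the aggressive iteration of the rapid-growth conditions \eqref{equ1}, \eqref{equ3}, and \eqref{equ4} imposed on $N_k$, $Y_k$, and $T_k$ in the construction, which forces both the connector-orbit total and the stage-$(k+1)$ block $Y_{k+1}$ to be a vanishing fraction of $M_{k+1}\leq n$ as $k\to\infty$.
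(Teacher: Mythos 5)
Your proposal is correct and follows essentially the same route as the paper: lower-bound the denominator in Lemma \ref{lem3.8} via $R_{q,j}\geq\exp\{n(q,j)(P^{Kat}_{m_{q,j}}(f,\psi,\epsilon')-g(\gamma))\}$ and Lemma \ref{lem3.3}, then show via \eqref{equ1}--\eqref{equ4} that the connector/remainder terms (and hence $n-\tilde n$) are $o(n)$, absorbing everything into the $5\gamma$. Your accounting of the error coefficient $E$ is in fact more explicit than the paper's, which simply asserts the corresponding limit is zero.
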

\begin{proof}
Recall that
\begin{align*}
a_{q,j}=n(q,j)C_{q,j},
R_{q,j}\geq \exp\left\{n(q,j)\left(P_{m_{q,j}}^{Kat}(f,\psi,\epsilon')-g(\gamma)\right)\right\}.
\end{align*}
Combing with lemma \ref{lem3.3}, we obtain
\begin{align*}
&\kappa_k\left(R_{k+1,1}^{N_{k+1}C_{k+1,1}} R_{k+1,2}^{N_{k+1}C_{k+1,2}}
\cdots R_{k+1,s_{k+1}}^{N_{k+1}C_{k+1,s_{k+1}}}\right)^{i-1}\\
=&\left(R_{1,1}^{N_1C_{1,1}}R_{1,2}^{N_1C_{1,2}}\cdots R_{1,s_1}^{N_1C_{1,s_1}}\right)^{T_1}\cdots
\left(R_{k,1}^{N_kC_{k,1}}R_{k,2}^{N_kC_{k,2}}\cdots R_{k,s_k}^{N_kC_{k,s_k}}\right)^{T_k}\\
&\left(R_{k+1,1}^{N_{k+1}C_{k+1,1}} R_{k+1,2}^{N_{k+1}C_{k+1,2}}
\cdots R_{k+1,s_{k+1}}^{N_{k+1}C_{k+1,s_{k+1}}}\right)^{i-1}\\
\geq&\exp\bigg\{\sum_{q=1}^{k}\sum_{j=1}^{s_q}T_qN_qC_{q,j}n(q,j)(P_{m_{q,j}}^{Kat}(f,\psi,\epsilon')-g(\gamma))\\
&+\sum_{j=1}^{s_{k+1}}(i-1)N_{k+1}C_{k+1,j}n(k+1,j)(P_{m_{k+1,j}}^{Kat}(f,\psi,\epsilon')-g(\gamma))\bigg\}\\
\geq&\exp\bigg\{\bigg(\sum_{q=1}^{k}T_qN_q
+(i-1)N_{k+1}\bigg)(P_{\nu}^{Kat}(f,\psi,\epsilon')-g(\gamma))\bigg\}\\
\geq&\exp\bigg\{n\bigg((P_{\nu}^{Kat}(f,\psi,\epsilon')-g(\gamma))-
\frac{n-\sum_{q=1}^{k}T_qN_q-(i-1)N_{k+1}}{n}(H-g(\gamma)\bigg)\bigg\}.
\end{align*}
From (\ref{equ2}) and $i\geq2$, we have
\begin{align*}
n-\sum_{q=1}^{k}T_qN_q-(i-1)N_{k+1}
&=n-\sum_{q=1}^{k}T_qY_q-(i-1)Y_{k+1}+\sum_{q=1}^{k}T_qX_q+(i-1)X_{k+1}\\
&\leq Y_{k+1}+\sum_{r=1}^{k+1}s(r,1,r+1,1)+\sum_{q=1}^{k}T_qX_q+(i-1)X_{k+1}.
\end{align*}
By inqualities (\ref{equ1}), (\ref{equ2}), (\ref{equ3}) and (\ref{equ4}) and $i\geq2$, we obtain
\begin{align*}
\lim_{n\to\infty}\frac{n-\sum_{q=1}^{k}T_qN_q-(i-1)N_{k+1}}{n}(H-g(\gamma))=0.
\end{align*}
Thus for sufficiently large $n$, we can deduce that
\begin{align*}
&\kappa_k\left(R_{k+1,1}^{N_{k+1}C_{k+1,1}} R_{k+1,2}^{N_{k+1}C_{k+1,2}}
\cdots R_{k+1,s_{k+1}}^{N_{k+1}C_{k+1,s_{k+1}}}\right)^{i-1}\\
\geq&\exp\bigg\{n\left(P_{\nu}^{Kat}(f,\psi,\epsilon')-g(\gamma)-\gamma\right)\bigg\}\\
\geq&\exp\bigg\{n\left(P_{\nu}(f,\psi)-g(\gamma)-2\gamma\right)\bigg\}.
\end{align*}
By (\ref{equ3.2}) and lemma \ref{lem3.8}, for sufficiently large $n$ and any $p\geq1$,
\begin{align*}
\alpha_{k+p}(B_{n}(x,\frac{\epsilon'}{8}))\leq
&\frac{\exp\left\{S_n\psi(x)+3n\gamma\right\}}
{\kappa_k\left(R_{k+1,1}^{N_{k+1}C_{k+1,1}}R_{k+1,2}^{N_{k+1}C_{k+1,2}}
\cdots R_{k+1,s_{k+1}}^{N_{k+1}C_{k+1,s_{k+1}}}\right)^{i-1}}\\
\leq&\exp\left\{-n(P_{\nu}(f,\psi)-5\gamma-g(\gamma))+S_n\psi(x)\right\}.
\end{align*}
\end{proof}
Applying the Generalised Pressure Distribution Principle, we have
\begin{align*}
P(G_\nu,\psi,\frac{\epsilon'}{8})
\geq P(F,\psi,\frac{\epsilon'}{8})\geq P_{\nu}(f,\psi)-5\gamma-g(\gamma).
\end{align*}
Let $\epsilon'\to0$ and $\gamma\to0$, the proof of theorem \ref{thm2.2} is completed.

%%%%%%%%%%%%%%%%%%%%%%%%%%%%%%%%%%%%%%%%%%%%%%%%%%%%%%%%%%%%%%%%%%%%%%%%%
%%%%%%%%%%%%%%%%%%%%%%%%%%%%%%%%%%%%%%%%%%%%%%%%%%%%%%%%%%%%%%%%%%%%%%%%%
%%%%%%%%%%%%%%%%%%%%%%%%%%%%%%%%%%%%%%%%%%%%%%%%%%%%%%%%%%%%%%%%%%%%%%%%%
%%%%%%%%%%%%%%%%%%%%%%%%%%%%%%%%%%%%%%%%%%%%%%%%%%%%%%%%%%%%%%%%%%%%%%%%%
\section{Some Applications}
{\bf Example 1 Diffeomorphisms on surfaces}
Let $f:M\to M$ be a $C^{1+\alpha}$ diffeomorphism with $\dim M=2$ and
$h_{top}(f)>0$, then there exists a hyperbolic measure $m\in\mathscr M_{\rm erg}(M,f)$
with Lyapunov exponents $\lambda_1>0>\lambda_2$(see \cite{Pol}). If
$\beta_1=|\lambda_2|$ and $\beta_2=\lambda_1$, then for any $\epsilon>0$
such that $\beta_1,\beta_2>\epsilon$, we have $m(\Lambda(\beta_1,\beta_2,\epsilon))=1$.
Let
\begin{align*}
\widetilde{\Lambda}=\bigcup_{k=1}^{\infty}\text{supp}(m|\Lambda(\beta_1,\beta_2,\epsilon)),
\end{align*}
then for every  $\nu\in \mathscr M_{\rm inv}(\widetilde{\Lambda}_{\mu},f),
\psi\in C^0(M),$ we have
\begin{align*}
P(G_\nu,\psi)=h_\nu(f)+\int \psi d\nu.
\end{align*}

\noindent{\bf Example 2 Nonuniformly hyperbolic systems} In \cite{Kat1}, Katok described a construction
of a diffeomorphism on the 2-torus $\mathbb{T}^2$ with nonzero Lyapunov exponents, which is not an Anosov map.
Let $f_0$ be a linear automorphism given by the matrix
\begin{align*}
  A=\begin{pmatrix}
   2&1\\
   1&1
  \end{pmatrix}
\end{align*}
with eigenvalues $\lambda^{-1}<1<\lambda$.
$f_0$ has a maximal measure $\mu_1$. Let $D_r$ denote the disk of radius $r$ centered
at (0,0), where $r>0$ is small, and put coordinates $(s_1,s_2)$ on $D_r$ corresponding to
the eigendirections of $A$, i.e, $A(s_1,s_2)=(\lambda s_1,\lambda^{-1}s_2)$.
The map $A$ is the time-1 map of the local flow in $D_r$
generated by the following system of differential equations:
\begin{align*}
\frac{ds_1}{dt}=s_1\log\lambda, \frac{ds_2}{dt}=-s_2\log\lambda.
\end{align*}
The Katok map is obtained from $A$ by slowing down these equations near the origin.
It depends upon a real-valued function $\psi$, which is defined on the unit interval $[0,1]$
and has the following properties:
\begin{enumerate}
\item[(1)] $\psi$ is $C^\infty$ except at 0;
\item[(2)] $\psi(0)=0$ and $\psi(u)=1$ for $u\geq r_0$ where $0<r_0<1$;
\item[(3)] $\psi^{\prime}(u)>0$ for every $0<u<r_0$;
\item[(4)] $\int_0^1\frac{du}{\psi(u)}<\infty$.
\end{enumerate}
Fix sufficiently small numbers $r_0<r_1$ and consider the time-1 map $g$ generated
by the following system of differential equations in $D_{r_1}$:
\begin{align*}
\frac{ds_1}{dt}=s_1\psi(s_1^2+s_2^2)\log\lambda, \frac{ds_2}{dt}=-s_2\psi(s_1^2+s_2^2)\log\lambda.
\end{align*}
The map $f$, given as $f(x)=g(x)$ if $x\in D_{r_1}$ and $f(x)=A(x)$ otherwise, defines a homeomorphism of
torus, which is a $C^\infty$ diffeomorphism everywhere except for the origin. To provide the differentiability
of map $f$, the function $\psi$ must satisfy some extra conditions. Namely, the integral $\int_0^1du/\psi$ must
converge ``very slowly" near the origin. We refer the smoothness to \cite{Kat1}. Here $f$ is contained in the
$C^0$ closure of Anosov diffeomorphisms and even more there is a homeomorphism $\pi:\mathbb{T}^2\to \mathbb{T}^2$
such that $\pi\circ f_0=f\circ \pi$. Let $\nu_0=\pi_\ast\mu_1$.

In \cite{LiaLiaSUnTia}, the authors proved
that there exist $0< \epsilon\ll \beta$ and a neighborhood $U$ of $\nu_0$ in
$\mathscr M_{\rm inv}(\mathbb{T}^2,f)$ such that for any ergodic $\nu\in U$ it holds that
$\nu\in \mathscr M_{\rm inv}(\widetilde{\Lambda}(\beta,\beta,\epsilon))$, where
$\widetilde{\Lambda}(\beta,\beta,\epsilon)=\bigcup_{k\geq1}{\rm supp}(\nu_0|\Lambda_k(\beta,\beta,\epsilon))$.

\begin{cro}
For every  $\nu\in \mathscr M_{\rm inv}(\widetilde{\Lambda}(\beta,\beta,\epsilon),f)$,
$\psi\in C^0(\mathbb{T}^2)$, we have $P(G_\nu,\psi)=h_\nu(f)+\int \psi d\nu$.
\end{cro}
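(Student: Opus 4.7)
The plan is to deduce this corollary as an immediate instance of Theorem \ref{thm2.2} applied with the ergodic reference measure $\mu=\nu_0$ and the Pesin parameters $(\beta_1,\beta_2,\epsilon')=(\beta,\beta,\epsilon)$ coming from the construction of Liang--Liao--Sun--Tian for the Katok map. The whole content of the corollary is essentially a bookkeeping exercise once these identifications are made; no new dynamical input is needed beyond what is already recalled in the description of Example 2.

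First I would check that the hypotheses of Theorem \ref{thm2.2} are satisfied in the present context. The map $f$ is a $C^{1+\alpha}$ diffeomorphism of the compact Riemannian manifold $\mathbb{T}^2$. The measure $\mu_1$ is the maximal measure of the Anosov automorphism $f_0=A$ and is therefore ergodic; since $\pi\circ f_0=f\circ\pi$ with $\pi$ a homeomorphism, the pushforward $\nu_0=\pi_\ast\mu_1$ is an $f$-invariant ergodic Borel probability measure. By the result of Liang et al.\ cited in Example 2, the Pesin set $\Lambda(\beta,\beta,\epsilon)$ is nonempty and $\nu_0\in\mathscr{M}_{\rm erg}(\mathbb{T}^2,f)$, so the standing hypotheses of Theorem \ref{thm2.2} hold with this choice of parameters.

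Next I would identify the set $\widetilde{\Lambda}(\beta,\beta,\epsilon)$ appearing in the corollary with the set $\widetilde{\Lambda}_{\nu_0}$ defined in Section 2. By definition in Section 2, for an ergodic measure $\mu$ one sets $\widetilde{\Lambda}_l={\rm supp}(\mu|\Lambda_l)$ and $\widetilde{\Lambda}_\mu=\bigcup_{l\geq 1}\widetilde{\Lambda}_l$. Specialising $\mu$ to $\nu_0$ and the Pesin blocks to $\Lambda_k(\beta,\beta,\epsilon)$ yields exactly
\[
\widetilde{\Lambda}_{\nu_0}=\bigcup_{k\geq 1}{\rm supp}(\nu_0|\Lambda_k(\beta,\beta,\epsilon))=\widetilde{\Lambda}(\beta,\beta,\epsilon),
\]
which is the set used in the corollary. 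Hence $\mathscr{M}_{\rm inv}(\widetilde{\Lambda}(\beta,\beta,\epsilon),f)=\mathscr{M}_{\rm inv}(\widetilde{\Lambda}_{\nu_0},f)$.

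Finally, for any $\nu\in\mathscr{M}_{\rm inv}(\widetilde{\Lambda}(\beta,\beta,\epsilon),f)$ and any $\psi\in C^0(\mathbb{T}^2)$, I apply Theorem \ref{thm2.2} directly to conclude $P(G_\nu,\psi)=h_\nu(f)+\int\psi\,d\nu$. There is effectively no hard step: the only point that requires care is the compatibility of the notations, namely that the $\widetilde{\Lambda}$ in the Katok example coincides with the $\widetilde{\Lambda}_\mu$ on which Theorem \ref{thm2.2} is stated. The nontrivial input that all such invariant $\nu$ actually exist in abundance (for instance, that ergodic measures in a weak-$\ast$ neighbourhood of $\nu_0$ lie in $\mathscr{M}_{\rm inv}(\widetilde{\Lambda}(\beta,\beta,\epsilon),f)$) is supplied by the result of Liang et al.\ quoted in Example 2 and is not re-proved here.
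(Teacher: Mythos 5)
Your proposal is correct and matches the paper's intent exactly: the corollary is stated there as an immediate application of Theorem \ref{thm2.2} with $\mu=\nu_0$ and parameters $(\beta,\beta,\epsilon)$, with the identification $\widetilde{\Lambda}(\beta,\beta,\epsilon)=\widetilde{\Lambda}_{\nu_0}$ and the nontriviality of $\mathscr M_{\rm inv}(\widetilde{\Lambda}(\beta,\beta,\epsilon),f)$ supplied by the cited result of Liang et al. No further comment is needed.
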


In \cite{LiaLiaSUnTia}, the authors also studied the structure of Pesin set $\widetilde{\Lambda}$
for the robustly transitive partially hyperbolic diffeomorphisms described
by Ma\~{n}\'{e} and the robustly transitive non-partially hyperbolic
diffeomorphisms described by Bonatti-Viana.
They showed that for the diffeomorphisms derived from
Anosov systems $\mathscr M_{\rm inv}(\widetilde{\Lambda},f)$
enjoys many members. So our result is applicable to these maps.

%%%%%%%%%%%%%%%%%%%%%%%%%%%%%%%%%%%%%%%%%%%%%%%%%%%%%%%%%%%%%%%%%%%%%%%%%
%%%%%%%%%%%%%%%%%%%%%%%%%%%%%%%%%%%%%%%%%%%%%%%%%%%%%%%%%%%%%%%%%%%%%%%%%
%%%%%%%%%%%%%%%%%%%%%%%%%%%%%%%%%%%%%%%%%%%%%%%%%%%%%%%%%%%%%%%%%%%%%%%%%
%%%%%%%%%%%%%%%%%%%%%%%%%%%%%%%%%%%%%%%%%%%%%%%%%%%%%%%%%%%%%%%%%%%%%%%%%

\noindent {\bf Acknowledgements.}   The research was supported by
the National Basic Research Program of China
(Grant No. 2013CB834100) and the National Natural Science
Foundation of China (Grant No. 11271191).
%%%%%%%%%%%%%%%%%%%%%%%%%%%%%%%%%%%%%%%%%%%%%%%%

\end{document}